\documentclass[12pt]{amsart}
\usepackage{amssymb,amsmath}
\usepackage{enumerate}
\usepackage{mathrsfs}
\newtheorem{theorem}{Theorem}[section]

\newtheorem{lemma}[theorem]{Lemma}

\theoremstyle{definition}
\newtheorem{definition}[theorem]{Definition}

\newtheorem{remark}[theorem]{Remark}

\usepackage[utf8]{inputenc}
\newcommand\hlight[1]{\tikz[overlay, remember picture,baseline=-\the\dimexpr\fontdimen22\textfont2\relax]\node[rectangle,fill=white!50,rounded corners,fill opacity = 0.2,draw,thick,text opacity =1] {$#1$};}
\usepackage[bookmarksnumbered, colorlinks, plainpages]{hyperref}

\setlength{\textheight}{9in}\setlength{\textwidth}{475pt}
\oddsidemargin -0mm \evensidemargin -0mm \topmargin -0pt

%

\title[ ]{Norms of basic operators in vector valued model spaces and de Branges spaces}

\author[Dhara]{Kousik Dhara}
\address{Kousik Dhara, Department of Mathematics, Weizmann Institute of Science, Rehovot 7610001, Israel}
\email{kousik.dhara@weizmann.ac.il}

\author[Dym]{Harry Dym}
\address{Harry Dym, Department of Mathematics, Weizmann Institute of Science, Rehovot 7610001, Israel}
\email{harry.dym@weizmann.ac.il}
\subjclass[2010]{30J10, 46E22, 47A56, 47B32, 47B35}

\keywords{ Hardy spaces, model operators, inner functions, reproducing kernel Hilbert spaces, de Branges spaces}

\numberwithin{equation}{section}
\begin{document}

\begin{abstract}
	Let $\Omega_+$ be either the open  unit disc or the open upper half plane or the open right half plane. In this paper, we compute the norm of the basic operator $A_\alpha=\Pi_\Theta T_{b_\alpha}|_{\mathcal{H}(\Theta)}$ in the vector valued model space $\mathcal{H}(\Theta)=H^m_2 \ominus \Theta H^m_2$ associated with an $m\times m$ matrix valued inner function $\Theta$ in $\Omega_+$ and show that the norm  is attained. Here $\Pi_\Theta$ denotes the orthogonal projection from the Lebesgue space $L^m_2$ onto $\mathcal{H}(\Theta)$ and $T_{b_\alpha}$ is the operator of multiplication by the elementary Blaschke factor $b_{\alpha}$ of degree one with a zero at a point $\alpha\in \Omega_+$. We show that  if $A_\alpha$ is strictly contractive, then its norm may be  expressed in terms of the singular values of $\Theta(\alpha)$. We then extend this evaluation to the more general setting of   vector valued de Branges spaces.
\end{abstract}

\maketitle
\section{Introduction}

Let $\Omega_+$ stand for any one of the three classical domains: (i) the open  unit disc $\mathbb{D}=\{\lambda\in \mathbb{C}:|\lambda|<1\}$, (ii) the open upper half plane $\mathbb{C}_+=\{\lambda\in \mathbb{C}: \text{ Im } \lambda>0\}$, or (iii) the open right half plane $\mathbb{C}_R=\{\lambda\in \mathbb{C}: \text{ Re } \lambda>0\}$. The \textit{Hardy space} $H^m_2$ denotes the Hilbert space of $m\times 1$ vector-valued  holomorphic functions with entries that belong to the scalar Hardy space $H_2(\Omega_+)$   with inner product
\begin{align}
	\label{eq:stip}
	\left<f,g\right>:=\begin{cases}
		\frac{1}{2\pi}\int_0^{2\pi} g(e^{i\theta})^\ast f(e^{i\theta})\, d\theta  & \quad \text{ if } \Omega_+=\mathbb{D}, \\
		\int_{-\infty}^{\infty} g(x)^\ast f(x)\, dx & \quad \text{ if } \Omega_+=\mathbb{C}_+, \\
		\int_{-\infty}^{\infty} g(iy)^\ast f(iy) \, dy & \quad \text{ if } \Omega_+=\mathbb{C}_R,
	\end{cases}
\end{align}
for $f,g\in H^m_2$. The space $H^m_2$ is identified as a closed subspace of the Lebesgue space $L^m_2$ by identifying each function in $H^m_2$ with its nontangential boundary limit (see e.g., \cite{Hoffman,RR}). An $m\times m$ matrix valued  holomorphic function $\Theta$ in $\Omega_+$ is said to be \textit{inner} if $\Theta(\lambda)^\ast \Theta(\lambda)\preceq I_m$ for all $\lambda \in \Omega_+$ (where  $A\preceq B$ for a pair of $m\times m$ matrices means that $B-A$ is positive semidefinite) and $\Theta(\lambda)^\ast\Theta(\lambda)=I_m$,  a.e. on the boundary of $\Omega_+$ (in the sense of nontangential boundary limits). 

For an $m\times m$ matrix valued inner function $\Theta$,  the corresponding vector valued \textit{model space} is the quotient space
\begin{align*}
	\mathcal{H}(\Theta):= H^m_2 \ominus \Theta H^m_2 \cong H^m_2/\Theta H^m_2.  
\end{align*}
The space $\mathcal{H}(\Theta)$ plays an important role in operator theory and function theory; 
see e.g., \cite{bro71, kuzhel, Nikolski 2, NF} .  
For a fixed point $\alpha \in \Omega_+$, the \textit{elementary Blaschke factor at $\alpha$}  is defined by
\begin{align}\label{Eq: Blaschke}
	b_\alpha(\lambda)=
	\begin{cases}
		(\lambda-\alpha)/(1-\overline{\alpha}\lambda)  & \quad \text{ if } \Omega_+= \mathbb{D} \text{ and } \lambda \in \mathbb{C}\setminus \{1/\overline{\alpha}\}, \\
		(\lambda-\alpha)/(\lambda-\overline{\alpha})  & \quad \text{ if } \Omega_+= \mathbb{C}_+ \text{ and } \lambda \in \mathbb{C}\setminus \{\overline{\alpha}\},\\
		(\lambda-\alpha)/(\lambda+\overline{\alpha})  & \quad \text{ if } \Omega_+= \mathbb{C}_R \text{ and } \lambda \in \mathbb{C}\setminus \{-\overline{\alpha}\}.
	\end{cases}
\end{align}	
Let  $T_{b_\alpha}$ be the operator of multiplication by  $b_\alpha$:
\begin{align*}
	(T_{b_\alpha} f)(\lambda)=b_\alpha(\lambda)f(\lambda) \, \, (f\in H^m_2 \text{ and }\lambda \in \Omega_+).
\end{align*}

\begin{definition}
	Let $\alpha\in \Omega_+$ and $\Theta$ be an $m\times m$ matrix valued inner function in $\Omega_+$. The \textit{basic operator}  $A_\alpha : \mathcal{H}(\Theta)\rightarrow \mathcal{H}(\Theta)$  is defined by the formula 
	\begin{align}\label{defn: formula}
		A_\alpha =\Pi_\Theta T_{b_{\alpha}}|_{\mathcal{H}(\Theta)},
	\end{align}
	where  $\Pi_\Theta$ denotes the orthogonal projection from $L^m_2$ onto $\mathcal{H}(\Theta)$.  
\end{definition}

A bounded linear operator $N$ on a Hilbert space $\mathcal{H}$ ($N\in \mathscr{B}({\mathcal{H}})$ for short) is said to be \textit{norm-attaining} if there exists a unit vector $f_0 \in \mathcal{H}$ such that
\[
\| Nf_0\|_{\mathcal{H}} = \|N\|_{\mathscr{B}(\mathcal{H})}.
\]
In this case, we write $N\in \mathcal{N} \mathcal{A}$. It is easy to see that the compact operators are norm attaining. 
Some additional classes of operators that attain their norm are considered in \cite{Bala,Bottcher,Brown,Car-Nev,Pandey-Paulsen}. 

The case $\alpha=0$ and $\Omega_+=\mathbb{D}$ has special interest, because any contraction $T$ on a separable Hilbert space with ${T^\ast }^n \rightarrow 0$
as $n\rightarrow \infty $ (in the strong operator topology) with the defect indices $(m,m)$ is unitarily equivalent to $A_0=\Pi_\Theta T_{b_0}|_{\mathcal{H}(\Theta)} $.  The choices  $\alpha=i$ (resp., $\alpha=1$) and $\Omega_+=\mathbb{C}_+$ (resp., $\Omega_+=\mathbb{C}_R$) enter in the study of dissipative and accretive operators.

In   \cite[ Proposition 3.1]{Bala} it was shown that  when $\Omega_+=\mathbb{D}$, then $A_0\in \mathcal{N} \mathcal{A}$ and $\Vert A_0\Vert=1$ if and only if there exists a nonzero vector $f\in{\mathcal{H}}(\Theta)$ with $f(0)=0$ for Hilbert space valued Hardy spaces.   The norm of $A_0$ was then evaluated when $m=1$; see 
\cite[ Theorem 3.3]{Bala} and \cite[ Section 7, Corollary 3]{GR}. 

In this paper, we exploit the theory of reproducing kernel Hilbert spaces to evaluate the norm of $A_\alpha$ for arbitrary points $\alpha\in\Omega_+$ and every positive integer $m$ and subsequently extend these results  to the general setting of de Branges 
spaces ${\mathcal{B}}({\mathfrak{E}})$. 
Our first main result is:

\begin{theorem}[see Theorem \ref{thm: main}]\label{thm: main intro}
	Let   $\Theta$ be an $m\times m$ matrix valued inner function in $\Omega_+$, and for each point $\alpha\in \Omega_+$  let 
	$$\mathcal{H}(\Theta)_\alpha=\{f\in \mathcal{H}(\Theta): f(\alpha)=0\}.$$ Then $A_\alpha \in \mathcal{N} \mathcal{A} $ and:
	\begin{enumerate}[\rm(1)]
		\item  $\| A_\alpha\|=1$ if and only if   $\mathcal{H}(\Theta)_\alpha \neq\{0\}$.
		
		\item 
		If $\mathcal{H}(\Theta)_\alpha= \{0\}$ and $s_1\geq s_2\geq \cdots\geq s_m$ are the singular values of $\Theta(\alpha)$, then $$\| A_\alpha\|=\underset{1\leq j\leq m}{\max } \, \{s_j: s_j<1\}.$$
	\end{enumerate}
\end{theorem}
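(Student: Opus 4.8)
The plan is to first reduce the norm computation to a defect estimate that settles contractivity and the case of norm $1$, and then to pin down the relevant eigenvalues through a single clean identity for $A_\alpha A_\alpha^\ast$ on the reproducing kernels of $\mathcal{H}(\Theta)$.

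\emph{Step 1 (contractivity and the norm-one case).} Since $b_\alpha$ is inner, $|b_\alpha|=1$ a.e.\ on the boundary of $\Omega_+$, so $\|b_\alpha f\|=\|f\|$ for $f\in H^m_2$; writing $A_\alpha f=\Pi_\Theta(b_\alpha f)=b_\alpha f-\Pi_{\Theta H^m_2}(b_\alpha f)$ gives
\[
\|A_\alpha f\|^2=\|f\|^2-\big\|\Pi_{\Theta H^m_2}(b_\alpha f)\big\|^2\qquad (f\in\mathcal{H}(\Theta)),
\]
hence $\|A_\alpha\|\le 1$, with $\|A_\alpha f\|=\|f\|$ for $f\neq0$ exactly when $b_\alpha f\in\mathcal{H}(\Theta)$. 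I would then check that $f\mapsto b_\alpha f$ is a bijection of $\{f\in\mathcal{H}(\Theta):b_\alpha f\in\mathcal{H}(\Theta)\}$ onto $\mathcal{H}(\Theta)_\alpha$: if $b_\alpha f=\phi\in\mathcal{H}(\Theta)$ then $\phi(\alpha)=0$, and conversely if $\phi\in\mathcal{H}(\Theta)_\alpha$ then $h:=\phi/b_\alpha\in H^m_2$ and $\langle h,\Theta g\rangle=\langle\phi,\Theta(b_\alpha g)\rangle=0$, so $h\in\mathcal{H}(\Theta)$ with $A_\alpha h=\phi$. Thus $\mathcal{H}(\Theta)_\alpha\neq\{0\}$ forces $\|A_\alpha\|=1$, attained at such an $h$, which yields the ``if'' half of (1) and norm-attainment in that case.

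\emph{Step 2 (the key kernel identity).} Let $K_\omega(\lambda)=\big(I_m-\Theta(\lambda)\Theta(\omega)^\ast\big)/\rho_\omega(\lambda)$ denote the matrix reproducing kernel of $\mathcal{H}(\Theta)$, where $\rho_\omega$ is the denominator of the Szegő kernel of $\Omega_+$. Using $A_\alpha^\ast=\Pi_\Theta T_{\overline{b_\alpha}}|_{\mathcal{H}(\Theta)}$ and the boundary identity $\overline{b_\alpha}=1/b_\alpha$, a direct computation gives $\overline{b_\alpha}K_\alpha u=(I_m-\Theta(\lambda)\Theta(\alpha)^\ast)u/(\lambda-\alpha)$, and projecting (the term carrying the interior pole is annihilated by $\Pi_\Theta$) yields
\[
A_\alpha^\ast K_\alpha u=-\,\frac{\Theta(\lambda)-\Theta(\alpha)}{\lambda-\alpha}\,\Theta(\alpha)^\ast u\qquad(u\in\mathbb{C}^m).
\]
Applying $A_\alpha$ and again splitting off the $\Theta H^m_2$ part, I expect to reach the clean, scale-free formula
\[
A_\alpha A_\alpha^\ast K_\alpha u=K_\alpha\,\Theta(\alpha)\Theta(\alpha)^\ast u\qquad(u\in\mathbb{C}^m).
\]
Carrying out these two projections correctly, uniformly across the disc and the two half-planes, is the main obstacle; the rest of the argument is soft.

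\emph{Step 3 (the case $\mathcal{H}(\Theta)_\alpha=\{0\}$).} Here the evaluation map $E_\alpha\colon f\mapsto f(\alpha)$ is injective, so $\dim\mathcal{H}(\Theta)\le m$; in particular $A_\alpha$ is automatically norm-attaining, and $\{K_\alpha u:u\in\mathbb{C}^m\}=\operatorname{ran}E_\alpha^\ast$ equals $\mathcal{H}(\Theta)$. Writing $M:=\Theta(\alpha)\Theta(\alpha)^\ast$, Step 2 reads $A_\alpha A_\alpha^\ast E_\alpha^\ast=E_\alpha^\ast M$, and since $\ker E_\alpha^\ast=\ker(I_m-M)$ is the eigenspace of $M$ for the eigenvalue $1$, restricting $E_\alpha^\ast$ to its orthogonal complement $\mathcal{K}$ gives a bijection $\mathcal{K}\to\mathcal{H}(\Theta)$ intertwining $M|_{\mathcal{K}}$ with $A_\alpha A_\alpha^\ast$, so these operators are similar. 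As $M|_{\mathcal{K}}$ is self-adjoint with eigenvalues $\{s_j^2:s_j<1\}$, the self-adjoint operator $A_\alpha A_\alpha^\ast$ has exactly these eigenvalues, whence
\[
\|A_\alpha\|^2=\|A_\alpha A_\alpha^\ast\|=\max_{1\le j\le m}\{s_j^2:s_j<1\}.
\]
This proves (2) and, together with Step 1, the ``only if'' half of (1), since the right-hand side is strictly less than $1$. Across the three domains only $\rho_\omega$ and the scalar constants change while the boxed kernel identity is unchanged, so the argument is uniform.
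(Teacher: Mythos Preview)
Your proposal is correct and takes a genuinely different route from the paper.

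The paper's argument is built around a single defect identity for the adjoint,
\[
\|A_\alpha^\ast f\|^2=\|f\|^2-\rho_\alpha(\alpha)\,f(\alpha)^\ast f(\alpha)\qquad(f\in\mathcal{H}(\Theta)),
\]
obtained by computing $(I-\Pi_\Theta)(f/b_\alpha)$ explicitly (Theorem~\ref{thm: adjoint norm}). Both directions of (1) fall out immediately, and for (2) the paper evaluates the Rayleigh quotient $\|A_\alpha^\ast K_\alpha u\|^2/\|K_\alpha u\|^2$, rewrites it as $w^\ast\Theta(\alpha)\Theta(\alpha)^\ast w/(w^\ast w)$ with $w=(I_m-\Theta(\alpha)\Theta(\alpha)^\ast)^{1/2}u$, and reads off the maximum via the SVD of $\Theta(\alpha)$.

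You instead aim for the operator identity $A_\alpha A_\alpha^\ast K_\alpha u=K_\alpha\,\Theta(\alpha)\Theta(\alpha)^\ast u$ and then, when $\mathcal{H}(\Theta)_\alpha=\{0\}$, use it to exhibit a (non-unitary) similarity between $A_\alpha A_\alpha^\ast$ and $M|_{\mathcal{K}}$; since both are positive self-adjoint, equal spectra give equal norms. Your key identity is true---in fact it drops out by polarizing the paper's defect formula, which is a softer route than the two projections you outline---and the similarity argument cleanly replaces the Rayleigh-quotient/SVD step. The paper's formula has the advantage of holding for \emph{all} $f$, which is exactly what makes the extension to de Branges spaces $\mathcal{B}(\mathfrak{E})$ in Section~\ref{sec:bofe0} immediate; your kernel identity is more structural and actually delivers the full spectrum of $A_\alpha A_\alpha^\ast$, not just its top eigenvalue. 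One minor correction: your intermediate formula $A_\alpha^\ast K_\alpha u=-(\Theta-\Theta(\alpha))\Theta(\alpha)^\ast u/\delta_\alpha$ is literally correct only for $\mathbb{D}$; in the half-plane cases a factor $-2\pi i$ or $2\pi$ appears, but, as you anticipated, it cancels in the final scale-free identity.
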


The rest of the paper is organized as follows.
In Section 2, we review some basic definitions and known results from the literature. Section 3 is devoted to preliminary analysis  
for the  three domains of interest. In Section 4, we prove the first main result of this paper. Section 5 deals with some reformulations of the basic operator. In Section 6, we extend the first main theorem to  the more general setting of de Branges spaces ${\mathcal{B}}({\mathfrak{E}})$. Since these results require a more extensive introduction, we postpone further discussion until that section and just note that  Theorem \ref{thm: main intro} is a special case of Theorem \ref{thm:mar1a22}. (In the notation of that section, it corresponds to the case that $E_-$ is an $m\times m$ matrix valued inner function and $E_+(\lambda)=I_m$.)

\section{Notation and Preliminaries}
In this section, we shall recall some basic definitions and results from the literature for future use.
Let $\Omega$ be a non empty open subset of $\mathbb{C}$ and let $\mathbb{C}^{m\times m}$ denote the space of all $m\times m$ complex matrices. A Hilbert space $\mathcal{H}$ of $m\times 1$ vector valued functions defined in $\Omega$ is said to be a \textit{reproducing kernel Hilbert space} (RKHS for short) if for each $\omega \in \Omega$ and $u\in \mathbb{C}^m$ there exists a function $K_\omega(\lambda)\in \mathbb{C}^{m \times m}$ such that the following holds:
\begin{enumerate}[\rm(1)]
	\item The function $K_\omega u: \Omega \rightarrow \mathbb{C}^m$ defined by ($K_\omega u)(\lambda)=K_\omega(\lambda) u$ is in $\mathcal{H}$, and 
	\item $\left<f,K_\omega u\right >_{\mathcal{H}}=u^* f(\omega)$, for every $f\in \mathcal{H}.$
\end{enumerate} 
There is only one function $K_\omega(\lambda)$ that meets these two conditions;  it is called the \textit{reproducing kernel} (RK for short) for $\mathcal{H}$.
The RK $K_\omega(\lambda)$ is positive in the following sense:
\begin{align}\label{Eq: pos kernel}
	\sum_{i,j=1}^{n} u_j^*K_{\omega_i}(\omega_j)u_i =\left\langle \sum_{i=1}^n K_{\omega_i}u_i, \sum_{j=1}^n K_{\omega_j}u_j\right\rangle_{\mathcal{H}}\geq 0
\end{align}
for every choice  of $\omega_1, \cdots, \omega_n \in \Omega$, $u_1,\cdots,u_n\in \mathbb{C}^m$ and $n\in \mathbb{N}$. Thus,   $K_\alpha(\beta)=K_\beta(\alpha)^\ast$ and $K_\omega(\omega)  \succeq 0$  for all $\alpha,\beta,\omega\in \Omega$.

An operator version of a theorem of Aronszjan guarantees that every positive kernel $K_\omega(\lambda)$ (in the sense of Eq \eqref{Eq: pos kernel}) gives exactly one RKHS with RK $K_\omega(\lambda)$ (see e.g.,\cite[Theorem 5.3]{Arov-Dym}).

The decomposition 	
\begin{align}\label{decomp}
	\mathcal{H}=\mathcal{H}_\alpha \oplus \{K_\alpha u: u \in \mathbb{C}^m\},
\end{align}
in which $\alpha\in \Omega$ and 
$$
\mathcal{H}_{\alpha}:=\{ f\in \mathcal{H}: f(\alpha)=0\}
$$ 
plays an important role  in the subsequent analysis.

The Hardy space $H^m_2$ is an RKHS with RK 
\begin{align*}
	K_\omega(\lambda)=\frac{I_m}{\rho_\omega(\lambda)} \, \, (\omega,\lambda \in \Omega), \text{ where }
\end{align*}
\begin{align}\label{Eq: kernel}
	\rho_\omega(\lambda)=\begin{cases}
		1-\lambda \overline{w} & \quad \text{ if } \Omega=\mathbb{D}, \\
		-2\pi i (\lambda-\overline{\omega})& \quad \text{ if } \Omega= \mathbb{C}_+, \\
		2\pi  (\lambda+\overline{\omega}) & \quad \text{ if } \Omega= \mathbb{C}_R.
	\end{cases}
\end{align}

Note that $\Omega_+=\{\omega\in\mathbb{C}:\,\rho_\omega(\omega)>0\}$. Correspondingly, we define  
\[
\begin{split}
	\Omega_-&=\{\omega\in\mathbb{C}:\,\rho_\omega(\omega)<0\}\quad\textrm{and}\\
	\Omega_0&=\{\omega\in\mathbb{C}:\,\rho_\omega(\omega)=0\}.
\end{split}
\]

If $\Theta$ is an $m\times m$ matrix valued inner function, then the subspace $\mathcal{H}(\Theta)=H_2^m\ominus \Theta H_2^m$ is again an RKHS with RK
\begin{align*}
	K_\omega (\lambda)=\dfrac{I_{m}-\Theta(\lambda)\Theta(\omega)^{*}}{\rho_\omega(\lambda)} \quad  ( \lambda, \omega \in \Omega). 
\end{align*}

We shall also use the following notation:\\
$N_\alpha(\lambda)=\rho_\alpha(\lambda)K_\alpha(\lambda)=I_{m}-\Theta(\lambda)\Theta(\alpha)^\ast \quad (\lambda, \alpha \in \Omega)$.\\
$\delta_\alpha(\lambda)=\lambda - \alpha \quad (\lambda,\alpha \in \mathbb{C}). $\\
For $\alpha\in \Omega$, the \textit{generalized backward shift operator}$R_\alpha$  is defined by the formula 
\begin{align*}
	(R_\alpha f)(\lambda)=\begin{cases}
		\dfrac{f(\lambda)-f(\alpha)}{\lambda-\alpha} & \quad \text{ if } \lambda\neq \alpha, \\
		f^{'}(\alpha) & \quad \text{ if } \lambda =\alpha,
	\end{cases}
\end{align*}
for functions $f$ that are holomorphic in $\Omega$.

\section{Preliminary Analysis}
In this section we verify some facts that will be needed for the proof of Theorem \ref{thm: main intro}. 
We begin with the following lemma.

\begin{lemma}\label{lem: inv under backward shift}
	If $\alpha\in \Omega_+$ and $\Theta$ is an $m\times m$ matrix valued inner function in $\Omega_+$, 
	then $\mathcal{H}(\Theta)$ is invariant under $R_\alpha$. 
\end{lemma}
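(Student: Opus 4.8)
The plan is to realize $R_\alpha$ on $H^m_2$ as an explicit linear combination of the identity and the adjoint of the isometry $T_{b_\alpha}$, and then to exploit the fact that $\mathcal{H}(\Theta)$ is coinvariant under $T_{b_\alpha}$. First I would record two preliminary facts. Since $|b_\alpha|=1$ a.e. on the boundary of $\Omega_+$, the operator $T_{b_\alpha}$ is an isometry on $H^m_2$; and because $b_\alpha$ is \emph{scalar}, $b_\alpha\,\Theta H^m_2=\Theta\,b_\alpha H^m_2\subseteq \Theta H^m_2$, so $\Theta H^m_2$ is $T_{b_\alpha}$-invariant. Passing to orthogonal complements inside $H^m_2$ (an invariant subspace of an operator has complement invariant under the adjoint) then shows that $\mathcal{H}(\Theta)=H^m_2\ominus\Theta H^m_2$ is invariant under $T_{b_\alpha}^\ast$, where $T_{b_\alpha}^\ast f=\Pi_{H^m_2}(\overline{b_\alpha}\,f)$ and $\Pi_{H^m_2}$ denotes the orthogonal projection of $L^m_2$ onto $H^m_2$.

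Next I would establish the key identity. Fix $f\in H^m_2$. On the boundary $\overline{b_\alpha}=1/b_\alpha$, and writing $b_\alpha=\delta_\alpha/d_\alpha$ with $d_\alpha$ the denominator of $b_\alpha$ in \eqref{Eq: Blaschke}, a short algebraic manipulation gives $\overline{b_\alpha}f=c_1 f+c_2\,\delta_\alpha^{-1}f$ for scalars $c_1,c_2$ depending only on $\alpha$ and the domain; explicitly $c_2=1-|\alpha|^2,\ \alpha-\overline{\alpha},\ \alpha+\overline{\alpha}$ in the cases $\mathbb{D},\ \mathbb{C}_+,\ \mathbb{C}_R$, so in each case $c_2$ is a nonzero scalar multiple of $\rho_\alpha(\alpha)>0$, hence $c_2\neq 0$. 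Now split $\delta_\alpha^{-1}f=\delta_\alpha^{-1}(f-f(\alpha))+\delta_\alpha^{-1}f(\alpha)$. The first summand equals $R_\alpha f$ and lies in $H^m_2$: since $f-f(\alpha)\in H^m_2$ vanishes at $\alpha$, I factor the scalar inner function $b_\alpha$ out of each component, $f-f(\alpha)=b_\alpha q$ with $q\in H^m_2$, whence $\delta_\alpha^{-1}(f-f(\alpha))=q/d_\alpha$, and $d_\alpha^{-1}$ is bounded and holomorphic on $\overline{\Omega_+}$ (its only pole lies outside $\Omega_+$). The second summand $\delta_\alpha^{-1}f(\alpha)$ lies in $(H^m_2)^\perp$, because the only singularity of $\delta_\alpha^{-1}=(\lambda-\alpha)^{-1}$ is the point $\alpha\in\Omega_+$, so it extends holomorphically across the complement of $\overline{\Omega_+}$ and is annihilated by $\Pi_{H^m_2}$ (checked from \eqref{Eq: kernel} in each of the three cases). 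Applying $\Pi_{H^m_2}$ therefore yields $T_{b_\alpha}^\ast f=c_1 f+c_2\,R_\alpha f$, that is, $R_\alpha=c_2^{-1}\bigl(T_{b_\alpha}^\ast-c_1 I\bigr)$ on $H^m_2$; in particular $R_\alpha$ maps $H^m_2$ into itself.

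Finally, since $\mathcal{H}(\Theta)$ is invariant under both $T_{b_\alpha}^\ast$ and the identity, the displayed formula shows at once that it is invariant under $R_\alpha$, which is the assertion. I expect the main obstacle to be the middle step: verifying uniformly, across the disc and the two half-planes, that the decomposition $\delta_\alpha^{-1}f=R_\alpha f+\delta_\alpha^{-1}f(\alpha)$ places its two summands in $H^m_2$ and $(H^m_2)^\perp$ respectively, and carrying out the boundary bookkeeping behind the relations $\overline{b_\alpha}=1/b_\alpha$, the factorization $f-f(\alpha)=b_\alpha q$, and the membership $\delta_\alpha^{-1}\in(H^m_2)^\perp$, each of which must be read off from \eqref{Eq: Blaschke} and \eqref{Eq: kernel} case by case.
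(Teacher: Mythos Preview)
Your overall strategy---expressing $R_\alpha$ on $H^m_2$ as $c_2^{-1}(T_{b_\alpha}^\ast-c_1 I)$ and then invoking the easy coinvariance of $\mathcal{H}(\Theta)$ under $T_{b_\alpha}$---is a clean, self-contained route that the paper itself does not take (the paper simply refers to \cite[Theorem~5.14]{Arov-Dym} and \cite[Section~3.2]{Arov-Dym 3}). However, there is a genuine gap in your verification that $R_\alpha f\in H^m_2$. You write ``since $f-f(\alpha)\in H^m_2$ vanishes at $\alpha$'' and then factor $f-f(\alpha)=b_\alpha q$ with $q\in H^m_2$. This is correct for $\Omega_+=\mathbb{D}$, where constants lie in $H_2$. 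But for $\Omega_+=\mathbb{C}_+$ or $\mathbb{C}_R$ nonzero constant vectors are not even in $L^m_2$, hence certainly not in $H^m_2$; thus $f-f(\alpha)\notin H^m_2$ in general and the factorization is unavailable as stated.

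The repair is local. In the half-plane cases, replace the constant $f(\alpha)$ by an $H^m_2$ function that agrees with $f$ at $\alpha$. For $\Omega_+=\mathbb{C}_+$, set $h(\lambda)=(\alpha-\overline{\alpha})(\lambda-\overline{\alpha})^{-1}f(\alpha)$; then $h\in H^m_2$ (the scalar factor $(\lambda-\overline{\alpha})^{-1}$ lies in $H_2$ because $\overline{\alpha}\in\mathbb{C}_-$) and $h(\alpha)=f(\alpha)$. Now $f-h\in H^m_2$ vanishes at $\alpha$, so your factorization applies and gives $(f-h)/\delta_\alpha\in H^m_2$; a one-line computation then yields
\[
R_\alpha f=\frac{f-h}{\delta_\alpha}-\frac{f(\alpha)}{\lambda-\overline{\alpha}},
\]
both summands in $H^m_2$. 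The case $\Omega_+=\mathbb{C}_R$ is identical with $-\overline{\alpha}$ in place of $\overline{\alpha}$. With $R_\alpha f\in H^m_2$ thus secured in all three cases, the rest of your argument---the identity $T_{b_\alpha}^\ast=c_1 I+c_2 R_\alpha$ on $H^m_2$ and the invariance of $\mathcal{H}(\Theta)$ under $T_{b_\alpha}^\ast$---goes through unchanged.
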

\begin{proof}
	The case $\Omega_+=\mathbb{C}_+$ is treated in detail in \cite[Theorem 5.14]{Arov-Dym}; see also \cite[Section 3.2]{Arov-Dym 3} for  relevant estimates. The other two cases may be verified in much the same way.
\end{proof}

\begin{lemma}\label{lem: adjoint}
	If $\alpha\in \Omega_+$, the adjoint $A_\alpha^\ast$ of the operator $A_\alpha$ defined by the formula \eqref{defn: formula} 
	with respect to the inner product \eqref{eq:stip} is
	\begin{align}\label{eq: adjoint}
		A_\alpha^\ast f &= \Pi_{\Theta} (1/b_{\alpha}) f \nonumber \\
		& = \begin{cases}
			-\overline{\alpha} f + (1-|\alpha|^2)R_\alpha f  & \quad \text{ if } \Omega_+=\mathbb{D}, \\
			f + (\alpha-\overline{\alpha})R_\alpha f  & \quad \text{ if } \Omega_+=\mathbb{C}_+, \\
			f + (\alpha+\overline{\alpha})R_\alpha f  & \quad \text{ if } \Omega_+=\mathbb{C}_R ,
		\end{cases}
	\end{align}
	for $f\in {\mathcal{H}}(\Theta)$.
\end{lemma}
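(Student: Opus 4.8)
The plan is to first reduce everything to the projection formula on the first line of \eqref{eq: adjoint}, and then to extract the $R_\alpha$-term by an elementary partial-fraction manipulation carried out separately in each of the three domains.

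First I would establish the identity $A_\alpha^\ast f = \Pi_\Theta(1/b_\alpha)f$. Since $\Pi_\Theta$ is the orthogonal projection of $L_2^m$ onto $\mathcal{H}(\Theta)$, it is self-adjoint and fixes $\mathcal{H}(\Theta)$ pointwise; consequently, for $f,g\in\mathcal{H}(\Theta)$,
\[
\langle A_\alpha f, g\rangle = \langle \Pi_\Theta T_{b_\alpha} f, g\rangle = \langle T_{b_\alpha} f, \Pi_\Theta g\rangle = \langle b_\alpha f, g\rangle.
\]
Because $b_\alpha$ has modulus one a.e.\ on the boundary of $\Omega_+$, multiplication by $b_\alpha$ is unitary on $L_2^m$ with adjoint the multiplication by $\overline{b_\alpha}=1/b_\alpha$; hence $\langle b_\alpha f, g\rangle = \langle f, (1/b_\alpha)g\rangle$. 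Using $f\in\mathcal{H}(\Theta)$ and the self-adjointness of $\Pi_\Theta$ once more gives $\langle f,(1/b_\alpha)g\rangle = \langle f,\Pi_\Theta(1/b_\alpha)g\rangle$, which identifies $A_\alpha^\ast g = \Pi_\Theta(1/b_\alpha)g$. Note that $(1/b_\alpha)g\in L_2^m$, since $|1/b_\alpha|=1$ on the boundary, so the projection is legitimate.

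Next I would compute $1/b_\alpha$ from \eqref{Eq: Blaschke} and split off the factor $\lambda-\alpha$ in the denominator. In the disc case, writing the numerator as $1-\overline{\alpha}\lambda = (1-|\alpha|^2)-\overline{\alpha}(\lambda-\alpha)$ yields
\[
\frac{1}{b_\alpha}\,g = -\overline{\alpha}\,g + (1-|\alpha|^2)\,\frac{g}{\lambda-\alpha}.
\]
Similarly, in the $\mathbb{C}_+$ (resp.\ $\mathbb{C}_R$) case the numerator $\lambda-\overline{\alpha}$ (resp.\ $\lambda+\overline{\alpha}$) splits as $(\lambda-\alpha)+(\alpha-\overline{\alpha})$ (resp.\ $(\lambda-\alpha)+(\alpha+\overline{\alpha})$), giving $g+(\alpha-\overline{\alpha})\,g/(\lambda-\alpha)$ (resp.\ $g+(\alpha+\overline{\alpha})\,g/(\lambda-\alpha)$). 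In every case I would then insert the identity $g/(\lambda-\alpha) = R_\alpha g + g(\alpha)/(\lambda-\alpha)$, which isolates the generalized backward shift.

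Finally I would apply $\Pi_\Theta$ term by term. The summands $g$ and $R_\alpha g$ already lie in $\mathcal{H}(\Theta)$ --- the former trivially, the latter by Lemma \ref{lem: inv under backward shift} --- so $\Pi_\Theta$ fixes them. The step I expect to be the crux is showing that the residual term $g(\alpha)/(\lambda-\alpha)$ is annihilated by $\Pi_\Theta$. For this I would argue that, viewed as a function of the boundary variable, $1/(\lambda-\alpha)$ lies in $L_2^m\ominus H_2^m$: since $\alpha\in\Omega_+$, the function $\lambda\mapsto 1/(\lambda-\alpha)$ is holomorphic on the complementary domain and square integrable on the boundary, hence orthogonal to $H_2^m$ and \emph{a fortiori} to $\mathcal{H}(\Theta)\subseteq H_2^m$; in the disc this is just the statement that its Fourier expansion involves only strictly negative powers of $\lambda$. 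Collecting the surviving terms then reproduces \eqref{eq: adjoint} in each of the three cases.
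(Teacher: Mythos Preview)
Your argument is correct and follows essentially the same route as the paper: first identify $A_\alpha^\ast g=\Pi_\Theta(1/b_\alpha)g$ via the unimodularity of $b_\alpha$ on the boundary, then use the partial-fraction identity $1/b_\alpha = c_0 + c_1/\delta_\alpha$ together with $g/\delta_\alpha=R_\alpha g + g(\alpha)/\delta_\alpha$, and finally observe that $R_\alpha g\in\mathcal{H}(\Theta)$ by Lemma~\ref{lem: inv under backward shift} while $g(\alpha)/\delta_\alpha\perp H_2^m$. You even supply a bit more justification than the paper for the last orthogonality step.
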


\begin{proof} 
	For all three choices of $\Omega_+$, it is readily checked that 
	$$
	\left< A_{\alpha}f,g \right>=\left< f, (1/b_\alpha) g\right>
	= \left < f, \Pi_\Theta((1/b_\alpha)g)\right>.
	$$
	Therefore, $A_\alpha^*g=\Pi_\Theta((1/b_\alpha)g)$ for all $g\in{\mathcal{H}}(\Theta)$. 	
	
	If $\alpha\in\Omega_+$, $\delta_\alpha(\lambda)=\lambda -\alpha $ and $f\in{\mathcal{H}}(\Theta)$, then:
	\begin{equation}
		\label{eq:mar1a22}
		\Omega_+=\mathbb{D} \implies 
		\frac{\displaystyle f} {\displaystyle b_\alpha} =-\overline{\alpha}f+(1-\vert\alpha\vert^2)\frac{f}{\delta_\alpha}=-\overline{\alpha}f+(1-\vert\alpha\vert^2)R_\alpha f+
		(1-\vert\alpha\vert^2)	\frac{f(\alpha)}{\delta_\alpha};
	\end{equation}
	\begin{equation}
		\label{eq:mar1b22}
		\Omega_+=\mathbb{C}_+ \implies 
		\frac{\displaystyle f} {\displaystyle b_\alpha}=f+(\alpha-\overline{\alpha})\frac{f}{\delta_\alpha}=f+(\alpha-\overline{\alpha})R_\alpha f+(\alpha-\overline{\alpha})\frac{f(\alpha)}{\delta_\alpha}
	\end{equation}
	and
	\begin{equation}
		\label{eq:mar1c22}
		\Omega_+=\mathbb{C}_R \implies
		\frac{\displaystyle f} {\displaystyle b_\alpha}=f+(\alpha+\overline{\alpha})\frac{f}{\delta_\alpha}=f+(\alpha+\overline{\alpha})R_\alpha f+(\alpha+\overline{\alpha})\frac{f(\alpha)}{\delta_\alpha}.
	\end{equation}
	Therefore, the explicit formulas in \eqref{eq: adjoint} hold for  all three choices of  $\Omega_+$,  
	since $R_\alpha f\in{\mathcal{H}}(\Theta)$ and $f(\alpha)/\delta_\alpha$ is orthogonal to $H_2^m(\Omega_+)$ in all three cases.
\end{proof}

\begin{remark}
	The adjoint of the operator $A_\alpha$ plays a significant role in the characterization of de Branges spaces; see e.g., Theorem 23 in \cite{de Branges} for $\Omega_+=\mathbb{C}_+$. Also see Theorem 7.1 of \cite{Dym-Sarkar} for $\Omega_+=\mathbb{C}_+$ and Theorem 5.2 of \cite{Dym}  for $\Omega_+=\mathbb{D}$ for spaces of vector valued functions.
\end{remark}

\begin{theorem} \label{thm: adjoint norm}
	If $\alpha \in \Omega_+$ and $\Theta $ is inner in $\Omega_+$, then
	\begin{align}\label{eq: adjoint norm}
		\|A_\alpha ^\ast f\|^2=\|f\|^2-\rho_\alpha(\alpha)f(\alpha)^\ast f(\alpha) \text{ for all } f\in \mathcal{H}(\Theta).
	\end{align}
\end{theorem}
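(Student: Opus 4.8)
The plan is to reduce everything to the identity $A_\alpha^\ast f=\Pi_\Theta\big((1/b_\alpha)f\big)$ established in Lemma~\ref{lem: adjoint}, and to exploit that $b_\alpha$ is inner, so that multiplication by $1/b_\alpha$ is a \emph{unitary} operator on $L_2^m$. First I would record the orthogonal splitting of $(1/b_\alpha)f$. Writing
\[
(1/b_\alpha)f=A_\alpha^\ast f+w,\qquad w:=(1/b_\alpha)f-A_\alpha^\ast f,
\]
the formulas \eqref{eq:mar1a22}--\eqref{eq:mar1c22} from the proof of Lemma~\ref{lem: adjoint} identify $w$ explicitly as $w=c_\alpha\, f(\alpha)/\delta_\alpha$, where $c_\alpha=1-|\alpha|^2,\ \alpha-\overline\alpha,\ \alpha+\overline\alpha$ according as $\Omega_+=\mathbb{D},\ \mathbb{C}_+,\ \mathbb{C}_R$. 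Since $A_\alpha^\ast f\in\mathcal{H}(\Theta)$ while $f(\alpha)/\delta_\alpha$ is orthogonal to $H_2^m\supseteq\mathcal{H}(\Theta)$ (as noted at the end of that proof), this is precisely the decomposition of $(1/b_\alpha)f$ into its $\mathcal{H}(\Theta)$-component and its $\mathcal{H}(\Theta)^{\perp}$-component.

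Next, because $|b_\alpha|=1$ a.e.\ on the boundary of $\Omega_+$, multiplication by $1/b_\alpha$ preserves the $L_2^m$-norm, so $\|(1/b_\alpha)f\|^2=\|f\|^2$. Combining this with the Pythagorean theorem applied to the orthogonal decomposition above gives
\[
\|f\|^2=\|(1/b_\alpha)f\|^2=\|A_\alpha^\ast f\|^2+\|w\|^2,
\]
so the theorem reduces to the single identity $\|w\|^2=\rho_\alpha(\alpha)\,f(\alpha)^\ast f(\alpha)$.

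Finally, I would evaluate $\|w\|^2$. Since $f(\alpha)$ is a constant vector, $\|w\|^2=|c_\alpha|^2\,f(\alpha)^\ast f(\alpha)\,\|1/\delta_\alpha\|^2$, so the whole matter comes down to the scalar $\|1/\delta_\alpha\|^2$. Rather than integrating $\int|\lambda-\alpha|^{-2}\,d\mu$ directly, I expect the cleanest route is to observe that $b_\alpha/\delta_\alpha=\eta_\alpha/\rho_\alpha$ is, up to the scalar constant $\eta_\alpha$ ($=1,\ -2\pi i,\ 2\pi$ in the three cases), the scalar Hardy-space reproducing kernel $1/\rho_\alpha$ at $\alpha$, which is holomorphic in $\Omega_+$. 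Since $|b_\alpha|=1$ on the boundary, $\|1/\delta_\alpha\|^2=|\eta_\alpha|^2\,\|1/\rho_\alpha\|^2=|\eta_\alpha|^2/\rho_\alpha(\alpha)$ by the reproducing property. Substituting back yields $\|w\|^2=|c_\alpha\eta_\alpha|^2\,\rho_\alpha(\alpha)^{-1}\,f(\alpha)^\ast f(\alpha)$, and the proof closes once one checks that $|c_\alpha\eta_\alpha|=\rho_\alpha(\alpha)$.

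The main obstacle — indeed the only nonroutine part — is this last bookkeeping of the domain-dependent constants $c_\alpha$ and $\eta_\alpha$ together with the verification that $|c_\alpha\eta_\alpha|=\rho_\alpha(\alpha)$ in each of the three geometries; everything else is a soft Pythagorean argument. If one prefers to bypass the reproducing-kernel identity, the same constants emerge from direct evaluation of $\int|\lambda-\alpha|^{-2}\,d\mu$, which gives $(1-|\alpha|^2)^{-1}$, $\pi/\mathrm{Im}\,\alpha$, and $\pi/\mathrm{Re}\,\alpha$ respectively, leading to the same conclusion.
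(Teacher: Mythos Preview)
Your argument is correct and follows essentially the same route as the paper: both use $\|f\|^2-\|A_\alpha^\ast f\|^2=\|(I-\Pi_\Theta)(1/b_\alpha)f\|^2$ together with the identification of $(I-\Pi_\Theta)(1/b_\alpha)f$ from \eqref{eq:mar1a22}--\eqref{eq:mar1c22}, and then reduce to computing $\|f(\alpha)/\delta_\alpha\|^2$. The only difference is cosmetic: the paper evaluates $\|1/\delta_\alpha\|^2$ by direct integration in each of the three cases, whereas you multiply by the unimodular factor $b_\alpha$ and invoke the reproducing-kernel identity $\|1/\rho_\alpha\|^2=1/\rho_\alpha(\alpha)$---a slightly slicker bookkeeping that you yourself note is interchangeable with the direct computation.
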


\begin{proof} 
	We first observe that 
	\begin{align*}
		\|f\|^2- \|A_\alpha ^\ast f\|^2 &=\Vert (1/b_\alpha)f\Vert^2-\Vert \Pi_\Theta (1/b_\alpha)f\Vert^2	\\
		&= \left< (I-\Pi_\Theta)(1/b_\alpha)f, (1/b_\alpha)f \right> \\
		&= \left< (I-\Pi_\Theta)(1/b_\alpha)f, (I-\Pi_\Theta)(1/b_\alpha)f \right> \\
		&=\Vert (I-\Pi_\Theta)(1/b_\alpha) f\Vert^2.
	\end{align*}   
	But, in view of formulas \eqref{eq:mar1a22}--\eqref{eq:mar1c22}, 
	\begin{equation}
		\label{eq:mar1g22}
		(I-\Pi_\Theta)\frac{f}{b_\alpha}=\left\{\begin{array}{ll}(1-\vert\alpha\vert^2) f(\alpha)\delta_\alpha^{-1}&\quad\textrm{if}\ 
			\Omega_+=\mathbb{D},\\ 
			(\alpha-\overline{\alpha})	f(\alpha)\delta_\alpha^{-1}&\quad\textrm{if}\ 
			\Omega_+=\mathbb{C}_+,\\
			(\alpha+\overline{\alpha})f(\alpha)\delta_\alpha^{-1}&\quad\textrm{if}\ 
			\Omega_+=\mathbb{C}_R.
		\end{array}
		\right.
	\end{equation}
	Thus, to complete the verification of formula \eqref{eq: adjoint norm}, it remains only to compute 
	$\Vert f(\alpha)/\delta_\alpha\Vert^2$ 
	in the norm based on the appropriate inner product \eqref{eq:stip} for $\Omega_+$.

	\begin{equation}
		\label{eq:mar1d22}
		\Omega_+=\mathbb{D} \implies 
		\Vert f(\alpha)/\delta_\alpha\Vert^2=f(\alpha)^*f(\alpha)\,\frac{1}{2\pi}\int_0^{2\pi} \vert e^{i\theta}-\alpha\vert^{-2}d\theta=
		\frac{f(\alpha)^*f(\alpha)}{1-\vert \alpha\vert^2}
	\end{equation}
	\begin{equation}
		\label{eq:mar1e22}
		\Omega_+=\mathbb{C}_+ \implies 
		\Vert f(\alpha)/\delta_\alpha\Vert^2=f(\alpha)^*f(\alpha)\,\int_{-\infty}^\infty \vert\mu-\alpha\vert^{-2}d\mu=
		2\pi i \frac{f(\alpha)^*f(\alpha)}{\alpha-\overline{\alpha}}
	\end{equation}
	\begin{equation}
		\label{eq:mar1e22}
		\Omega_+=\mathbb{C}_R \implies 
		\Vert f(\alpha)/\delta_\alpha\Vert^2=f(\alpha)^*f(\alpha)\,\int_{-\infty}^\infty \vert i\nu-\alpha\vert^{-2}d\nu=
		2\pi  \frac{f(\alpha)^*f(\alpha)}{\alpha+\overline{\alpha}}.
	\end{equation}
	Formula \eqref{eq: adjoint norm} drops out by combining formulas. \end{proof}

\section{The first main result}
In this section, we prove the first main result of this paper. We begin with the following lemma.

\begin{lemma}\label{lem: main lemma}
	
	If $\alpha \in \Omega_+$, $u\in\mathbb{C}^m$ and  $f=K_\alpha u\ne 0$, then:
	\begin{enumerate}[\rm(1)]
		\item	$\|A_\alpha  ^\ast f\|^2=u^\ast K_\alpha(\alpha)u-\rho_\alpha(\alpha) u^\ast K_\alpha(\alpha)^2u$.
		\vspace{0.5cm}
		\item 	$\dfrac{\|A_\alpha  ^\ast f\|^2}{\|f\|^2}= \dfrac{w^* \Theta(\alpha)\Theta(\alpha)^*w}{w^*w}$, where $w= N_\alpha(\alpha)^{1/2}u$ and  $N_\alpha(\alpha)=I_m-\Theta(\alpha)\Theta(\alpha)^\ast$.
	\end{enumerate}
\end{lemma}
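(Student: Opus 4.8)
The plan is to compute both parts by specializing the general norm identity from Theorem \ref{thm: adjoint norm} to the reproducing kernel functions $f = K_\alpha u$, and then to rewrite the resulting quadratic forms using the factorization $N_\alpha(\alpha) = I_m - \Theta(\alpha)\Theta(\alpha)^\ast$.

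For part (1), I would start from the identity
\[
\|A_\alpha^\ast f\|^2 = \|f\|^2 - \rho_\alpha(\alpha)\, f(\alpha)^\ast f(\alpha),
\]
valid for all $f \in \mathcal{H}(\Theta)$, and substitute $f = K_\alpha u$. The two ingredients needed are $\|K_\alpha u\|^2$ and $(K_\alpha u)(\alpha)$. By the reproducing property (condition (2) in the definition of the RK) with $g = K_\alpha u$, we have $\|K_\alpha u\|^2 = \langle K_\alpha u, K_\alpha u\rangle = u^\ast (K_\alpha u)(\alpha) = u^\ast K_\alpha(\alpha) u$. Similarly $(K_\alpha u)(\alpha) = K_\alpha(\alpha)u$, so that $f(\alpha)^\ast f(\alpha) = u^\ast K_\alpha(\alpha)^\ast K_\alpha(\alpha) u = u^\ast K_\alpha(\alpha)^2 u$, where the last step uses $K_\alpha(\alpha) = K_\alpha(\alpha)^\ast$ (Hermitian, being a positive kernel value). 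Plugging these in yields exactly
\[
\|A_\alpha^\ast f\|^2 = u^\ast K_\alpha(\alpha) u - \rho_\alpha(\alpha)\, u^\ast K_\alpha(\alpha)^2 u,
\]
which is the claimed formula.

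For part (2), the goal is to re-express the ratio in terms of $N_\alpha(\alpha) = I_m - \Theta(\alpha)\Theta(\alpha)^\ast$. The key relation is $N_\alpha(\lambda) = \rho_\alpha(\lambda) K_\alpha(\lambda)$, so at $\lambda = \alpha$ we get $K_\alpha(\alpha) = N_\alpha(\alpha)/\rho_\alpha(\alpha)$. Substituting this into part (1) and factoring out $1/\rho_\alpha(\alpha)$ should produce
\[
\|A_\alpha^\ast f\|^2 = \frac{1}{\rho_\alpha(\alpha)}\, u^\ast N_\alpha(\alpha) u - \frac{1}{\rho_\alpha(\alpha)}\, u^\ast N_\alpha(\alpha)^2 u = \frac{1}{\rho_\alpha(\alpha)}\, u^\ast N_\alpha(\alpha)\bigl(I_m - N_\alpha(\alpha)\bigr) u,
\]
and since $I_m - N_\alpha(\alpha) = \Theta(\alpha)\Theta(\alpha)^\ast$, this becomes $\rho_\alpha(\alpha)^{-1} u^\ast N_\alpha(\alpha)\,\Theta(\alpha)\Theta(\alpha)^\ast\, u$. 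For the denominator, $\|f\|^2 = u^\ast K_\alpha(\alpha) u = \rho_\alpha(\alpha)^{-1} u^\ast N_\alpha(\alpha) u$. The factors of $\rho_\alpha(\alpha)$ then cancel in the ratio, leaving
\[
\frac{\|A_\alpha^\ast f\|^2}{\|f\|^2} = \frac{u^\ast N_\alpha(\alpha)\,\Theta(\alpha)\Theta(\alpha)^\ast\, u}{u^\ast N_\alpha(\alpha) u}.
\]
Finally, introducing $w = N_\alpha(\alpha)^{1/2} u$ (legitimate since $N_\alpha(\alpha) \succeq 0$) turns the denominator into $w^\ast w$. The main technical point to verify carefully is that $N_\alpha(\alpha)^{1/2}$ commutes appropriately with $\Theta(\alpha)\Theta(\alpha)^\ast$ so that $u^\ast N_\alpha(\alpha)\Theta(\alpha)\Theta(\alpha)^\ast u = w^\ast \Theta(\alpha)\Theta(\alpha)^\ast w$; this follows because $N_\alpha(\alpha) = I_m - \Theta(\alpha)\Theta(\alpha)^\ast$ is a polynomial in $\Theta(\alpha)\Theta(\alpha)^\ast$, hence so is $N_\alpha(\alpha)^{1/2}$, and all three matrices share a common eigenbasis. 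I expect this commutation check to be the only genuinely substantive step; everything else is the reproducing-property bookkeeping and the substitution $K_\alpha(\alpha) = N_\alpha(\alpha)/\rho_\alpha(\alpha)$.
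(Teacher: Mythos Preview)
Your proof is correct and follows essentially the same approach as the paper: both parts rest on Theorem~\ref{thm: adjoint norm} together with the reproducing property and the relation $K_\alpha(\alpha)=N_\alpha(\alpha)/\rho_\alpha(\alpha)$. The only cosmetic difference is that the paper factors the numerator directly as $u^\ast N_\alpha(\alpha)^{1/2}\bigl(I_m-N_\alpha(\alpha)\bigr)N_\alpha(\alpha)^{1/2}u$, which makes the commutation you flagged immediate (all the matrices involved are functions of the single Hermitian matrix $\Theta(\alpha)\Theta(\alpha)^\ast$), whereas you first write $N_\alpha(\alpha)\Theta(\alpha)\Theta(\alpha)^\ast$ and then invoke the commutation explicitly; either way the step is trivial and the arguments coincide.
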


\begin{proof} 
	In view of Theorem \ref{thm: adjoint norm},   
	\begin{align*}
		\|A_\alpha  ^\ast f\|^2 &=\|K_\alpha u\|^2-\rho_\alpha(\alpha)u^\ast  K_\alpha(\alpha)K_\alpha(\alpha) u \\ 
		&= u^*K_\alpha (\alpha)u -\rho_\alpha(\alpha)u^\ast K_\alpha(\alpha)^2 u,
	\end{align*}
	which yields $(1)$. 
	
	Thus, as  $K_\alpha(\alpha)=N_\alpha(\alpha)/\rho_\alpha(\alpha)$,  
	\begin{align*}
		\dfrac{\|A_\alpha ^\ast f\|^2}{\|f\|^2} &= \dfrac{\|K_\alpha u\|^2-\rho_\alpha(\alpha)u^* K_\alpha(\alpha)^2 u}{u^* K_\alpha(\alpha)u}\\
		&= \dfrac{u^* N_\alpha (\alpha) u-u^* N_\alpha(\alpha)^2 u}{u^*N_\alpha(\alpha) u} \\
		&= \dfrac{u^* N_\alpha (\alpha)^{1/2}(I_m- N_\alpha(\alpha)) N_\alpha(\alpha)^{1/2} u}{u^*N_\alpha(\alpha) u} \\
		&= \frac{w^* \Theta(\alpha)\Theta(\alpha)^* w}{w^* w}.
	\end{align*}
	Therefore, (2) holds. 	
\end{proof}

The next theorem relies on the singular value decomposition of the matrix $\Theta(\alpha)$.
\begin{theorem}\label{thm: NA on subspace}
	Let $\alpha\in \Omega_+$ and  $\Theta$ be an $m\times m$ matrix valued  inner function in $\Omega_+$. Let $\mathcal{H}=\mathcal{H}(\Theta)$ and 
	suppose that $\Theta(\alpha)$ has the singular values $s_1\geq s_2\geq \cdots\geq s_m$ and that 
	$s_1=\cdots=s_k=1$ and $s_{k+1}<1$ for some $k\in \{1, \cdots, m\}.$ Let
	\begin{align*}
		\mathcal{M}_\alpha=\{ K_\alpha u : u \in \mathbb{C}^m \}.
	\end{align*} 
	Then 
	\begin{equation}
		\label{eq:mar2a22}
		\|A_\alpha ^\ast  |_{\mathcal{M}_\alpha} \| =s_{k+1}
	\end{equation}
	and the norm is attained.

\end{theorem}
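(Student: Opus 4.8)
The plan is to reduce the computation of $\|A_\alpha^\ast|_{\mathcal{M}_\alpha}\|$ to an optimization over $\mathbb{C}^m$ using part (2) of Lemma \ref{lem: main lemma}, and then diagonalize via the singular value decomposition of $\Theta(\alpha)$. First I would observe that every nonzero element of $\mathcal{M}_\alpha$ has the form $f=K_\alpha u$ for some $u\in\mathbb{C}^m$, so that
\begin{equation*}
\|A_\alpha^\ast|_{\mathcal{M}_\alpha}\|^2
=\sup_{\substack{u\in\mathbb{C}^m\\ K_\alpha u\ne 0}}
\frac{\|A_\alpha^\ast K_\alpha u\|^2}{\|K_\alpha u\|^2}
=\sup_{\substack{u\in\mathbb{C}^m\\ K_\alpha u\ne 0}}
\frac{w^\ast \Theta(\alpha)\Theta(\alpha)^\ast w}{w^\ast w},
\quad w=N_\alpha(\alpha)^{1/2}u,
\end{equation*}
by Lemma \ref{lem: main lemma}(2). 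The key algebraic point is that as $u$ ranges over $\mathbb{C}^m$ (subject to $K_\alpha u\ne 0$, i.e. $w\ne 0$), the vector $w=N_\alpha(\alpha)^{1/2}u$ ranges over the range of $N_\alpha(\alpha)^{1/2}$, which coincides with the range of $N_\alpha(\alpha)=I_m-\Theta(\alpha)\Theta(\alpha)^\ast$.

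Next I would introduce the singular value decomposition $\Theta(\alpha)=U\Sigma V^\ast$ with $\Sigma=\mathrm{diag}(s_1,\dots,s_m)$, so that $\Theta(\alpha)\Theta(\alpha)^\ast=U\Sigma^2 U^\ast$ and hence $N_\alpha(\alpha)=U(I_m-\Sigma^2)U^\ast$. In the orthonormal basis given by the columns of $U$, the matrix $N_\alpha(\alpha)$ is diagonal with entries $1-s_j^2$, which vanish exactly for $j\le k$ (where $s_j=1$) and are positive for $j>k$. Thus the range of $N_\alpha(\alpha)$ is the span of the columns $u_{k+1},\dots,u_m$ of $U$, and restricting the Rayleigh quotient $w\mapsto (w^\ast\Theta(\alpha)\Theta(\alpha)^\ast w)/(w^\ast w)$ to this subspace amounts to maximizing $\sum_{j>k}s_j^2|c_j|^2 \big/ \sum_{j>k}|c_j|^2$ over coefficient vectors $(c_{k+1},\dots,c_m)$. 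Since $s_{k+1}\ge s_{k+2}\ge\cdots\ge s_m$, the supremum equals $s_{k+1}^2$, giving $\|A_\alpha^\ast|_{\mathcal{M}_\alpha}\|=s_{k+1}$.

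Finally, for attainment I would exhibit an explicit maximizer: take $w=u_{k+1}$ (the $(k+1)$-st left singular vector), which lies in the range of $N_\alpha(\alpha)$, and choose $u$ with $N_\alpha(\alpha)^{1/2}u=w$, for instance $u=(1-s_{k+1}^2)^{-1/2}u_{k+1}$, using $1-s_{k+1}^2>0$. Then $f=K_\alpha u$ is a nonzero element of $\mathcal{M}_\alpha$ for which the ratio equals $s_{k+1}^2$ exactly, so the norm is attained on $\mathcal{M}_\alpha$.

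The main obstacle I anticipate is the careful bookkeeping of the constraint $K_\alpha u\ne 0$: one must verify that $K_\alpha u=0$ is equivalent to $N_\alpha(\alpha)u=0$ (equivalently $w=0$), so that excluding the trivial case correctly restricts $w$ to the \emph{nonzero} vectors in $\mathrm{range}\,N_\alpha(\alpha)$ rather than to all of $\mathbb{C}^m$. This is what prevents the degenerate directions $u_1,\dots,u_k$ (where $s_j=1$) from entering the supremum and forcing the answer up to $1$; it is precisely the hypothesis $s_{k+1}<1$ that guarantees the relevant subspace is nonempty and that the maximum $s_{k+1}$ is genuinely attained by a unit vector there.
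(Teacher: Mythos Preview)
Your proposal is correct and follows essentially the same route as the paper's proof: invoke Lemma~\ref{lem: main lemma}(2) to reduce to the Rayleigh quotient $w^\ast\Theta(\alpha)\Theta(\alpha)^\ast w/(w^\ast w)$ with $w=N_\alpha(\alpha)^{1/2}u$, diagonalize via the SVD of $\Theta(\alpha)$ to see that $w$ is constrained to the span of the left singular vectors with $s_j<1$, and read off the maximum $s_{k+1}^2$ with an explicit maximizer. Your added remark that $K_\alpha u\ne 0\iff w\ne 0$ (which follows from $\|K_\alpha u\|^2=\rho_\alpha(\alpha)^{-1}\,w^\ast w$) is a worthwhile clarification that the paper leaves implicit.
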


\begin{proof}
	Let $f$ be a non-zero vector in $\mathcal{M}_\alpha$. Then $f=K_\alpha u$ for some $u\in \mathbb{C}^m$. In view of  Lemma \ref{lem: main lemma}(2), 
	\begin{align}\label{Eq: important}
		\frac{\|A_\alpha  ^\ast f\|^2}{\|f\|^2} 	
		&=\frac{w^* \Theta(\alpha)\Theta(\alpha)^*w}{w^*w}.
	\end{align}
	Suppose $\Theta (\alpha) = V S 
	U^*$ is the singular value decomposition of $\Theta(\alpha)$,	
	where $S$ is the diagonal matrix consisting of the singular values $s_1\geq s_2\geq  \cdots\geq s_m$ of $\Theta (\alpha)$ and the  matrices $U$ and $V$ are unitary. Let   $V=\begin{bmatrix}
		v_1 & v_2 & \cdots & v_m
	\end{bmatrix}$. 
	Then
	\begin{align*}
		N_\alpha(\alpha) &=I_m-\Theta(\alpha)\Theta(\alpha)^\ast\\ &=I_m- VS^2V^* \\
		&= V(I_m-S^2) V^* \\
		&=V\begin{bmatrix}
			1-s_1^2\\
			& \ddots \\
			&&&1-s_m^2\\
		\end{bmatrix} V^*.
	\end{align*}
	Hence, if $s_1=s_2=\cdots=s_k=1$ and $s_{k+1}<1$, then 
	\begin{align*}
		N_\alpha(\alpha)= \begin{bmatrix}
			v_{k+1}  & \cdots & v_m
		\end{bmatrix}
		\begin{bmatrix}
			1-s_{k+1}^2\\
			&\ddots\\
			&&& 1-s_m^2\\
		\end{bmatrix}
		\begin{bmatrix}
			v_{k+1} & \cdots & v_m
		\end{bmatrix}^\ast.
	\end{align*}
	Thus, range $(N_\alpha(\alpha))=\text{span }\{v_{k+1}, \cdots, v_m\}.$
	Then we have by Equation \eqref{Eq: important}
	\begin{align*}
		\frac{\|A_\alpha  ^\ast f\|^2}{\|f\|^2} 
		&= \frac{w^*V S ^{2}V^*w}{w^*w}\\
		&=\frac{ (SV^{*}w)^{*}(SV^*w) }{w^*w}\\
		&= \frac{\|S V^*w\|^2}{\|w\|^2}.
	\end{align*}
	Since $w=N_\alpha(\alpha)^{1/2}u \in \text{ span } \{ v_{k+1}, \cdots, v_m\}$,
	\begin{align*}
		w=\sum_{j=k+1}^{m} c_j v_j \quad\textrm{and}\quad V^*w=\sum_{j=k+1}^{m} c_j e_j 		
	\end{align*}
	for some complex numbers $c_{k+1}, \cdots, c_m$, where $e_j$ denotes the $j$'th column of $I_m$.  
	Hence 
	\begin{align*}
		\|SV^*w\|^2=\sum_{j=k+1}^{m} s_j^2 |c_j|^2 \leq s_{k+1}^2 \sum_{j=k+1}^{m}|c_j|^2= s_{k+1}^2\|w\|^2.
	\end{align*}
	Consequently, 
	\begin{align*}
		\max \frac{\|SV^*w\|^2}{\|w\|^2}= s_{k+1}^2.
	\end{align*}
	where the maximum is achieved by choosing $u=v_{k+1}$. 
	Thus, \eqref{eq:mar2a22} holds and the norm is attained.
\end{proof}
Now we are ready to prove our first main theorem. Before that, we recall a well known result (see e.g., \cite[Proposition 2.5]{Car-Nev}): A bounded linear operator $N\in \mathcal{N} \mathcal{A}$ if and only if $N^\ast \in \mathcal{N} \mathcal{A}$. 	

\begin{theorem}\label{thm: main}
	Let  $\Theta$ be an $m\times m$ matrix valued inner function in $\Omega_+$ and recall that for each point $\alpha\in \Omega_+$,  
	$\mathcal{H}(\Theta)_\alpha=\{f\in \mathcal{H}(\Theta): f(\alpha)=0\}$. Then $A_\alpha \in \mathcal{N} \mathcal{A}$ and:
	\begin{enumerate}[\rm(1)]
		\item  $\| A_\alpha\|=1$ if and only if   $\mathcal{H}(\Theta)_\alpha \neq\{0\}$.
		
		\item If  $\mathcal{H}(\Theta)_\alpha= \{0\}$ and $s_1\geq s_2\geq \cdots\geq s_m$ are the singular values of $\Theta(\alpha)$, then $$\| A_\alpha\|=\underset{1\leq j\leq m}{\max } \, \{s_j: s_j<1\}.$$
	\end{enumerate}
\end{theorem}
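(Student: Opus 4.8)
The plan is to pass to the adjoint $A_\alpha^*$ throughout, exploiting the two facts that $\|A_\alpha\|=\|A_\alpha^*\|$ and, by the result recalled just above the statement, that $A_\alpha\in\mathcal{N}\mathcal{A}$ if and only if $A_\alpha^*\in\mathcal{N}\mathcal{A}$. The whole argument then rests on the identity of Theorem \ref{thm: adjoint norm}, namely $\|A_\alpha^* f\|^2=\|f\|^2-\rho_\alpha(\alpha)f(\alpha)^\ast f(\alpha)$. Since $\alpha\in\Omega_+$ forces $\rho_\alpha(\alpha)>0$, this shows at once that $A_\alpha^*$ is a contraction and that, for a unit vector $f$, the equality $\|A_\alpha^* f\|=1$ holds precisely when $f(\alpha)=0$, i.e. when $f\in\mathcal{H}(\Theta)_\alpha$. (We may assume $\mathcal{H}(\Theta)\neq\{0\}$, the claim being trivial otherwise.)

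For the ``if'' direction of (1), suppose $\mathcal{H}(\Theta)_\alpha\neq\{0\}$ and choose a unit vector $f_0$ in it. Then $\|A_\alpha^* f_0\|=1=\|f_0\|$, and since $A_\alpha^*$ is a contraction this forces $\|A_\alpha^*\|=1$ with the norm attained at $f_0$; hence $\|A_\alpha\|=1$ and $A_\alpha\in\mathcal{N}\mathcal{A}$. The ``only if'' direction is the contrapositive of (2): once (2) is established, $\mathcal{H}(\Theta)_\alpha=\{0\}$ yields $\|A_\alpha\|=s_{k+1}<1$, so $\|A_\alpha\|=1$ can occur only when $\mathcal{H}(\Theta)_\alpha\neq\{0\}$.

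The crux is (2). The key reduction is that when $\mathcal{H}(\Theta)_\alpha=\{0\}$ the orthogonal decomposition \eqref{decomp} collapses to $\mathcal{H}(\Theta)=\mathcal{M}_\alpha$, where $\mathcal{M}_\alpha=\{K_\alpha u:u\in\mathbb{C}^m\}$. Consequently $A_\alpha^*=A_\alpha^*|_{\mathcal{M}_\alpha}$, and I would invoke Theorem \ref{thm: NA on subspace} to conclude $\|A_\alpha^*\|=s_{k+1}$ with the norm attained at $u=v_{k+1}$. Writing $k$ for the number of singular values of $\Theta(\alpha)$ equal to $1$, one has $\max\{s_j:s_j<1\}=s_{k+1}$; moreover $k<m$ necessarily, because $\mathcal{M}_\alpha=\mathcal{H}(\Theta)\neq\{0\}$ forces $N_\alpha(\alpha)=I_m-\Theta(\alpha)\Theta(\alpha)^\ast\neq 0$, so that $s_{k+1}$ is defined and the displayed set is nonempty. (If $k=0$, Theorem \ref{thm: NA on subspace} is not literally stated, but its proof applies verbatim with $\mathrm{range}(N_\alpha(\alpha))=\mathbb{C}^m$; alternatively one reads the value off the Rayleigh quotient in Lemma \ref{lem: main lemma}(2).) Transferring back through the adjoint duality gives $\|A_\alpha\|=s_{k+1}=\max\{s_j:s_j<1\}$ with the norm attained, which completes (2) and with it the ``only if'' half of (1).

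The main obstacle, and the point deserving the most care, is the reduction step in (2): recognising that $\mathcal{H}(\Theta)_\alpha=\{0\}$ makes $\mathcal{M}_\alpha$ the \emph{entire} space, so that the singular-value computation already carried out on $\mathcal{M}_\alpha$ in Theorem \ref{thm: NA on subspace} in fact governs the global norm. It is tempting but false to expect $\mathcal{H}(\Theta)_\alpha=\{0\}$ to rule out singular values equal to $1$; these can coexist (for instance $\Theta=\mathrm{diag}(b_\alpha,1)$), which is exactly why the maximisation must be taken over $\mathrm{range}(N_\alpha(\alpha))$ rather than all of $\mathbb{C}^m$, and why the restriction built into Theorem \ref{thm: NA on subspace} is essential.
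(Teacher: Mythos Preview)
Your proof is correct and follows essentially the same route as the paper: both pass to $A_\alpha^*$, invoke Theorem~\ref{thm: adjoint norm} for the ``if'' direction of (1), reduce (2) to Theorem~\ref{thm: NA on subspace} via the collapse $\mathcal{H}(\Theta)=\mathcal{M}_\alpha$, and obtain the ``only if'' direction of (1) as a consequence of (2). Your treatment is in fact slightly more careful than the paper's in flagging that Theorem~\ref{thm: NA on subspace} is literally stated only for $k\geq 1$ and in noting that $k<m$ is forced, and your example $\Theta=\mathrm{diag}(b_\alpha,1)$ illustrating that singular values equal to $1$ can coexist with $\mathcal{H}(\Theta)_\alpha=\{0\}$ is apt.
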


\begin{proof} To make the logic of the proof transparent, we divide the argument into three short steps.
	\bigskip
	
	\noindent
	{\bf 1.}\ {\it If ${\mathcal{H}}(\Theta)_\alpha\ne \{0\}$, then $A_\alpha\in\mathcal{N} \mathcal{A}$ and $\Vert A_\alpha\Vert=1$}.
	\bigskip
	
	If $\mathcal{H}(\Theta)_\alpha \neq \{0\}$,  
	then there exists a non-zero vector $f\in \mathcal{H}(\Theta)$ such that $f(\alpha)=0$. Therefore,  Theorem \ref{thm: adjoint norm}, ensures that  
	\begin{align*}
		\|A_\alpha ^\ast f\|=\|f\|,
	\end{align*}
	and hence that $\| A_\alpha ^\ast\|=1$ and $A_\alpha^\ast  \in \mathcal{N} \mathcal{A}$. Thus, $A_\alpha \in \mathcal{N} \mathcal{A}$ and $\|A_\alpha\|=1$. 
	\bigskip
	
	\noindent
	{\bf 2.} \ {\it If ${\mathcal{H}}(\Theta)_\alpha =\{0\}$, then $A_\alpha\in\mathcal{N} \mathcal{A}$, $\Vert A_\alpha\Vert<1$ and (2) holds}.
	\bigskip
	
	If $\mathcal{H}(\Theta)_\alpha = \{0\}$, then
	the decomposition $\mathcal{H}(\Theta)= \mathcal{H}(\Theta)_\alpha \oplus \mathcal{M}_\alpha=\mathcal{M}_\alpha $, where $\mathcal{M}_\alpha=\{K_\alpha u: u\in \mathbb{C}^m\}$.     By Theorem \ref{thm: NA on subspace}, we have $A_\alpha^\ast  \in \mathcal{N} \mathcal{A}$ and
	\begin{align*}
		\| A_{\alpha}^\ast\|=\underset{1\leq j\leq m}{\max } \, \{s_j: s_j<1\},
	\end{align*}
	where $s_1\geq s_2\geq\cdots\geq s_m$ are the singular values of $\Theta(\alpha)$. Thus, $A_\alpha  \in \mathcal{N} \mathcal{A}$,  $\Vert A_\alpha\Vert=\Vert A_\alpha^*\Vert<1$ and (2) holds.
	\bigskip
	
	\noindent
	{\bf 3.} \ {\it $A_\alpha\in\mathcal{N} \mathcal{A}$ and  (1) holds}.
	\bigskip
	
	Steps 1 and 2 cover all possibilities. Therefore, $A_\alpha  \in \mathcal{N} \mathcal{A}$ and (1) holds. Nevertheless, it is reassuring to observe that 
	if $\|A_\alpha \|=1$, then, as  $A_\alpha ^\ast\in \mathcal{N} \mathcal{A}$, there exists a non-zero vector $g\in \mathcal{H}(\Theta)$ such that
	\begin{align*}
		\| A_\alpha ^\ast g\|=\|g\|.
	\end{align*}
	Consequently,  $\|g(\alpha)\|^2=g(\alpha)^*g(\alpha)=0$ by Theorem \ref{thm: adjoint norm}.  Thus,  $\mathcal{H}(\Theta)_\alpha \neq \{0\}$. This gives an independent proof of the converse of the implication in Step 1, and so too another way to complete the proof of (1). 
	
\end{proof}

\begin{remark}\label{rem: Bala}
	The decomposition $\mathcal{H}(\Theta)= \mathcal{H}(\Theta)_\alpha \oplus \mathcal{M}_\alpha $ where $\mathcal{M}_\alpha=\{K_\alpha u: u\in \mathbb{C}^m\}$ implies that
	\begin{align*}
		\mathcal{H}(\Theta)_{\alpha}\neq \{0\} \text{ if and only if } \dim \, \mathcal{H}(\Theta)> \dim \, \mathcal{M}_\alpha.
	\end{align*}

\end{remark}

\section{Equivalent Reformulations}
In this section, we present some equivalent reformulations of the basic operator $A_\alpha$, that will be expressed in terms of the notation \eqref{Eq: Blaschke} for $b_\alpha$, \eqref{Eq: kernel} for $\rho_\alpha$ and the auxiliary notation:
\begin{align*}
	f^\#(\lambda)=\begin{cases}
		f\left(1/ \overline{\lambda} \right)^\ast &\quad \text{ if } \Omega_+=\mathbb{D} \quad \text{ and } \lambda\neq 0, \\
		f(\overline{\lambda})^\ast & \quad \text{ if } \Omega_+=\mathbb{C}_+ , \\
		f(-\overline{\lambda})^\ast & \quad \text{ if } \Omega_+=\mathbb{C}_R,
	\end{cases}
\end{align*}
and for $\alpha\in \Omega_+$
\begin{align*}
	\varphi_\alpha(\lambda)=\begin{cases}
		b_{-\alpha}(\lambda) & \quad \text{ if } \Omega_+=\mathbb{D}, \\
		\lambda	(\alpha-\overline{\alpha})/2i + (\alpha+\overline{\alpha})/2 & \quad \text{ if } \Omega_+=\mathbb{C}_+, \\
		\lambda	(\alpha+\overline{\alpha})/2  + (\alpha-\overline{\alpha})/2 & \quad \text{ if } \Omega_+=\mathbb{C}_R.
	\end{cases}
\end{align*}

\noindent We begin with formulas for $A_\alpha$.
\begin{lemma}\label{lem: basic op expression}
	If $\alpha\in \Omega_+$ and $\Theta$ is an $m\times m$ matrix valued inner function in $\Omega_+$, then
	\begin{align}\label{eq: basic op}
		A_\alpha f= \begin{cases}
			b_{\alpha} f - \dfrac{\rho_\alpha(\alpha)}{\overline{\alpha}}\dfrac{\Theta(\Theta ^\# f)(1/\overline{\alpha})}{\rho_\alpha} & \quad  \text{ if } \alpha\neq 0 \text{ and }\Omega_+=\mathbb{D}, \\
			b_0f - \Theta \lim\limits_{\beta\rightarrow 0} \dfrac{ (\Theta^\# f)(1/\overline{\beta})}{\overline{\beta}}  & \quad  \text{ if } \alpha= 0 \text{ and }\Omega_+=\mathbb{D}, \\
			b_\alpha f +\dfrac {\rho_\alpha(\alpha)}{\rho_\alpha} \Theta(\Theta^\# f)(\overline{\alpha})  & \quad \text{ if } \Omega_+=\mathbb{C}_+,\\
			b_\alpha f +\dfrac{\rho_\alpha(\alpha)}{\rho_\alpha} \Theta(\Theta^\# f)(-\overline{\alpha})  & \quad  \text{ if } \Omega_+=\mathbb{C}_R,
		\end{cases}
	\end{align}
	for all $f\in {\mathcal{H}}(\Theta)$.
\end{lemma}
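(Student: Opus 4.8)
The plan is to compute $A_\alpha f=\Pi_\Theta(b_\alpha f)$ for $f\in\mathcal{H}(\Theta)$ by writing $\Pi_\Theta=I-(I-\Pi_\Theta)$ and identifying the piece that is subtracted off. Since $f\in H_2^m$ and $b_\alpha$ is a scalar inner function, $b_\alpha f\in H_2^m$; as the orthogonal complement of $\mathcal{H}(\Theta)$ in $L_2^m$ is $(L_2^m\ominus H_2^m)\oplus\Theta H_2^m$ and $b_\alpha f$ has no component in $L_2^m\ominus H_2^m$, I get
\[
A_\alpha f=b_\alpha f-(I-\Pi_\Theta)(b_\alpha f)=b_\alpha f-\Theta\,P_+\!\left(\Theta^\# b_\alpha f\right),
\]
where $P_+$ denotes the orthogonal projection of $L_2^m$ onto $H_2^m$ and I use that $\Theta$ is an isometric multiplier with $\Theta^\#\Theta=I_m$ a.e. on the boundary, so that the orthogonal projection onto $\Theta H_2^m$ is $g\mapsto\Theta P_+(\Theta^\# g)$. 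Everything thus reduces to the single Riesz projection $P_+(b_\alpha\psi)$, where $\psi:=\Theta^\# f$ and I have used that $b_\alpha$ is scalar.

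The key structural input is that $f\in\mathcal{H}(\Theta)$ forces $\psi=\Theta^\# f$ to lie in $(H_2^m)^\perp=L_2^m\ominus H_2^m$, since $\langle\Theta^\# f,h\rangle=\langle f,\Theta h\rangle=0$ for every $h\in H_2^m$. Consequently $\psi$ is the boundary value of a function holomorphic in $\Omega_-$; in particular $P_+\psi=0$ and the value $\psi(\gamma)=(\Theta^\# f)(\gamma)$ is meaningful for any $\gamma\in\Omega_-$. Here $\gamma=\gamma_\alpha$ will be the pole of $b_\alpha$, namely $\gamma_\alpha=1/\overline{\alpha}$ when $\Omega_+=\mathbb{D}$ (with $\alpha\ne0$), $\gamma_\alpha=\overline{\alpha}$ when $\Omega_+=\mathbb{C}_+$, and $\gamma_\alpha=-\overline{\alpha}$ when $\Omega_+=\mathbb{C}_R$, and in each case $\gamma_\alpha\in\Omega_-$.

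I would then compute $P_+(b_\alpha\psi)$ by splitting off this pole. In the two half-plane cases this is transparent because $b_\alpha-1=-\rho_\alpha(\alpha)/\rho_\alpha$, so $b_\alpha\psi=\psi-\frac{\rho_\alpha(\alpha)}{\rho_\alpha}\psi$ and, since $P_+\psi=0$, only the second term survives. Writing $\frac{\psi(\lambda)}{\rho_\alpha(\lambda)}=\frac{\psi(\lambda)-\psi(\gamma_\alpha)}{\rho_\alpha(\lambda)}+\frac{\psi(\gamma_\alpha)}{\rho_\alpha(\lambda)}$, the first summand cancels the pole at $\gamma_\alpha$ and stays in $(H_2^m)^\perp$, while $1/\rho_\alpha=K_\alpha\in H_2$, so $P_+(\psi/\rho_\alpha)=\psi(\gamma_\alpha)/\rho_\alpha$. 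This gives $P_+(b_\alpha\psi)=-\frac{\rho_\alpha(\alpha)}{\rho_\alpha}(\Theta^\# f)(\gamma_\alpha)$, and substituting back produces the stated formulas for $\mathbb{C}_+$ and $\mathbb{C}_R$. For $\Omega_+=\mathbb{D}$ with $\alpha\ne0$ the identity $b_\alpha-1=-\rho_\alpha(\alpha)/\rho_\alpha$ fails, so I instead run the partial-fraction step directly on $b_\alpha\psi=\frac{(\lambda-\alpha)\psi}{1-\overline{\alpha}\lambda}$: subtracting $\frac{(1/\overline{\alpha}-\alpha)\psi(1/\overline{\alpha})}{1-\overline{\alpha}\lambda}$ removes the pole at $1/\overline{\alpha}$ and leaves a remainder in $(H_2^m)^\perp$, and the simplification $1/\overline{\alpha}-\alpha=\rho_\alpha(\alpha)/\overline{\alpha}$ yields the first line of \eqref{eq: basic op}. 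The case $\alpha=0$ then falls out by letting $\alpha\to0$ (equivalently $\gamma_\alpha=1/\overline{\alpha}\to\infty$), which is exactly the limit recorded in the second line.

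The main obstacle is the Riesz-projection computation in the third step: I must justify carefully that, after subtracting the single pole term, the remainder genuinely lies in $(H_2^m)^\perp$. This requires checking both its holomorphy on the $\Omega_-$ side and its decay at infinity, which is most delicate in the disc case, where the extra numerator factor $\lambda-\alpha$ in $b_\alpha$ makes the behaviour at $\lambda=\infty$ nontrivial and forces the separate limiting argument for $\alpha=0$. As an \emph{a posteriori} cross-check, one can verify each answer directly: the right-hand side $g$ of every formula manifestly satisfies $b_\alpha f-g\in\Theta H_2^m$, and a short computation of $P_+(\Theta^\# g)$—again using $\psi\in(H_2^m)^\perp$ together with the same partial-fraction identity—shows $g\perp\Theta H_2^m$, so that $g\in\mathcal{H}(\Theta)$ and hence $\Pi_\Theta(b_\alpha f)=\Pi_\Theta g=g$.
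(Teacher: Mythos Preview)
Your approach is correct and genuinely different from the paper's. The paper computes $\Pi_\Theta(b_\alpha f)$ by first writing $b_\alpha f=f+(\overline{\alpha}-\alpha)\,f/\delta_{\overline{\alpha}}$ (in the $\mathbb{C}_+$ case) and then evaluating $\Pi_\Theta(f/\delta_{\overline{\alpha}})$ pointwise via the reproducing kernel identity $u^*(\Pi_\Theta g)(\omega)=\langle g,K_\omega u\rangle$; the RK is split as $(I_m-\Theta\Theta(\omega)^*)/\rho_\omega$, and the cross term is handled by a contour integral that exploits $\Theta^\# f/\delta_\omega\in L_2^m\ominus H_2^m$. You instead use the operator identity $(I-\Pi_\Theta)\vert_{H_2^m}=\Theta P_+\Theta^\#$, reduce everything to the single Riesz projection $P_+(b_\alpha\psi)$ with $\psi=\Theta^\# f\in(H_2^m)^\perp$, and compute that projection by subtracting the principal part at the pole $\gamma_\alpha\in\Omega_-$ of $b_\alpha$. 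Both proofs hinge on the same key fact, namely that $\Theta^\# f\in(H_2^m)^\perp$; the difference is that the paper accesses it through a RK inner-product calculation, while you use it to extend $\psi$ holomorphically to $\Omega_-$ and read off residues directly. Your route is arguably more elementary (no kernel pairing) and makes the structure $A_\alpha f=b_\alpha f-\Theta\cdot(\text{constant vector})/\rho_\alpha$ visible from the outset; the paper's RKHS computation, on the other hand, avoids the mild analytic bookkeeping you flag as the ``main obstacle'' (verifying that the pole-subtracted remainder lies in $(H_2^m)^\perp$, in particular the decay at $\infty$ in the disc case that forces the separate $\alpha=0$ limit).
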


\begin{proof}
	We consider the case when $\Omega_+=\mathbb{C}_+$.
	Let $f\in \mathcal{H}(\Theta)$. Then
	\begin{align*}
		b_\alpha(\lambda) f(\lambda)=\dfrac{\lambda-\alpha}{\lambda-\overline{\alpha}}f(\lambda)=f(\lambda)+\dfrac{\overline{\alpha}-\alpha}{\lambda-\overline{\alpha}}f(\lambda)=f(\lambda)+\dfrac{\overline{\alpha}-\alpha}{\delta_{\overline{\alpha}}(\lambda)} f(\lambda),
	\end{align*}
	and hence
	\begin{align*}
		\Pi_\Theta b_\alpha f = f+(\overline{\alpha}-\alpha)\Pi_\Theta \dfrac{f}{\delta_{\overline{\alpha}}}.
	\end{align*}
	The last term is evaluated by observing that for every vector $u\in \mathbb{C}^m$ and every point $\omega\in \mathbb{C}_+$
	\begin{align*}
		u^\ast\left (\Pi_\Theta \dfrac{f}{\delta_{\overline{\alpha}}} \right)(\omega) = \left< \dfrac{f}{\delta_{\overline{\alpha}}}, K_\omega u \right> 
		&= \left< \dfrac{f}{\delta_{\overline{\alpha}}}, \dfrac{(I_m-\Theta \Theta(\omega)^\ast)u}{\rho_\omega} \right> \\
		&= u^\ast \dfrac{f(\omega)}{\omega-\overline{\alpha}} - \left< \dfrac{f}{\delta_{\overline{\alpha}}}, \dfrac{\Theta \Theta(\omega)^\ast u}{\rho_\omega} \right>,
	\end{align*}
	since $\dfrac{f}{\delta_{\overline{\alpha}}}\in H^m_2$ and $\dfrac{I_m}{\rho_\omega}$ is the RK for $H^m_2$.
	Moreover, 
	\begin{align*}
		\left< \dfrac{f}{\delta_{\overline{\alpha}}}, \dfrac{\Theta \Theta(\omega)^\ast u}{\rho_\omega} \right>	&=  \left<  \dfrac{\Theta^\# f}{\delta_{\overline{\alpha}}}, \dfrac{ \Theta(\omega)^\ast u}{\rho_\omega} \right> \\
		&=  \dfrac{u^\ast \Theta(\omega)}{2\pi i} \int_{-\infty}^{\infty} \dfrac{(\Theta^\# f)(\mu)}{(\mu-\overline{\alpha})(\mu-\omega)} \, d\mu \\
		&=- u^\ast \Theta(\omega) \dfrac{(\Theta^\# f)(\overline{\alpha})}{\overline{\alpha}-\omega},	
	\end{align*}
	since $\Theta^\# f/\delta_\omega \in L^m_2 \ominus H^m_2$.
	Thus,
	\begin{align*}
		\Pi_\Theta b_\alpha f =f+ \dfrac{(\overline{\alpha}-\alpha)}{ \delta_{\overline{\alpha}}}f + \left( \dfrac{\alpha-\overline{\alpha}}{\delta_{\overline{\alpha}}} \right)\Theta (\Theta^\# f)(\overline{\alpha}),
	\end{align*} 
	which coincides with the formula in \eqref{lem: basic op expression} for $\Omega_+=\mathbb{C}_+$.  The formulas for $\mathbb{D}$ and $\mathbb{C}_R$ can be verified in much the same way.	
\end{proof}

\begin{theorem}
	Let $\Theta$ be an $m\times m$ matrix valued inner function in $\Omega_+$ and let $\widetilde{\Theta}(\lambda)=\Theta(\varphi_\alpha(\lambda))$ for $\alpha \in \Omega_+$. Then $\widetilde{\Theta}$ is an $m\times m$ matrix valued inner function in $\Omega_+$ and: 
	\begin{enumerate}[\rm(1)]
		\item  $A_\alpha$ is unitarily equivalent to the model operator $A_0$  in $\mathcal{H}(\widetilde{\Theta})$, when $\Omega_+=\mathbb{D}$,
		
		\item  $A_\alpha$ is unitarily equivalent to $A_i$ in $\mathcal{H}(\widetilde{\Theta})$, when $\Omega_+=\mathbb{C}_+$, and 
		
		\item $A_\alpha$ is unitarily equivalent to $A_1$ in $\mathcal{H}(\widetilde{\Theta})$, when $\Omega_+=\mathbb{C}_R$.
		
		\item The operator $A_0$ acting in  $\mathcal{H}(\Theta)=H^m_2(\mathbb{D}) \ominus \Theta H^m_2 (\mathbb{D})$ is unitarily equivalent to the operator $A_i$ acting in $\mathcal{H}(\Theta_0)=H^m_2(\mathbb{C}_+) \ominus \Theta_0 H^m_2 (\mathbb{C}_+)$, where $\Theta_0(\mu)=\Theta\left(\dfrac{\mu-i}{\mu+i}\right)$ for all $\mu\in \mathbb{C}_+$.
		
		\item The operator $A_0$ acting in  $\mathcal{H}(\Theta)=H^m_2(\mathbb{D}) \ominus \Theta H^m_2 (\mathbb{D})$ is unitarily equivalent to the operator $A_1$ acting in $\mathcal{H}(\Theta_1)=H^m_2(\mathbb{C}_R) \ominus \Theta_1 H^m_2 (\mathbb{C}_R)$, where $\Theta_1(\mu)=\Theta\left(\dfrac{\mu-1}{\mu+1}\right)$ for all $\mu\in \mathbb{C}_R$.
	\end{enumerate}
\end{theorem}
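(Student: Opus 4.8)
The plan is to realize each unitary equivalence through an explicitly defined weighted composition operator built from the conformal change of variable that sends the distinguished base point to $\alpha$ (for (1)--(3)) or that transfers one domain to another (for (4)--(5)). In every case the operator is shown to be (i) unitary between the relevant Hardy spaces, (ii) to carry $\Theta H^m_2$ onto the image inner-times-Hardy subspace and hence $\mathcal{H}(\Theta)$ onto the target model space, and (iii) to intertwine the two multiplication operators; combining these three facts yields the intertwining of the basic operators. Throughout I write $b_\ast$ (resp. $A_\ast$) for the multiplier (resp. basic operator) at the base point, namely $b_0$ and $A_0$ on $\mathbb{D}$, $b_i$ and $A_i$ on $\mathbb{C}_+$, and $b_1$ and $A_1$ on $\mathbb{C}_R$.

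For (1)--(3), I would first record the elementary composition identity $b_\alpha\circ\varphi_\alpha=b_\ast$, a one-line M\"obius computation using $\varphi_\alpha=b_{-\alpha}$ on $\mathbb{D}$, $\varphi_\alpha(\lambda)=(\operatorname{Im}\alpha)\lambda+\operatorname{Re}\alpha$ on $\mathbb{C}_+$, and $\varphi_\alpha(\lambda)=(\operatorname{Re}\alpha)\lambda+i\operatorname{Im}\alpha$ on $\mathbb{C}_R$; the same computation shows $\varphi_\alpha$ is an automorphism of $\Omega_+$ carrying the base point to $\alpha$. Define $U_\alpha f=(\varphi_\alpha')^{1/2}(f\circ\varphi_\alpha)$ in all three cases, where on the half planes $\varphi_\alpha'$ is the positive constant $\operatorname{Im}\alpha$ (resp.\ $\operatorname{Re}\alpha$) and on the disc one takes the holomorphic branch of the root available on the simply connected, zero-free domain. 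A boundary change of variables in the appropriate inner product \eqref{eq:stip}, together with invertibility of $\varphi_\alpha$, shows $U_\alpha$ is unitary onto $H^m_2$. Since $\widetilde{\Theta}=\Theta\circ\varphi_\alpha$ we have $U_\alpha(\Theta h)=\widetilde{\Theta}\,(U_\alpha h)$, so $U_\alpha$ maps $\Theta H^m_2$ onto $\widetilde{\Theta}H^m_2$; this proves $\widetilde{\Theta}$ is inner, that $U_\alpha$ maps $\mathcal{H}(\Theta)$ onto $\mathcal{H}(\widetilde{\Theta})$, and that $U_\alpha\Pi_\Theta=\Pi_{\widetilde{\Theta}}U_\alpha$ on $H^m_2$. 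Finally $U_\alpha T_{b_\alpha}=T_{b_\ast}U_\alpha$ by the composition identity, and assembling these facts gives $U_\alpha A_\alpha=A_\ast U_\alpha$ on $\mathcal{H}(\Theta)$, which is (1)--(3).

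For (4) and (5), I would instead use the Cayley-type unitaries $V\colon H^m_2(\mathbb{D})\to H^m_2(\mathbb{C}_+)$ and $V_1\colon H^m_2(\mathbb{D})\to H^m_2(\mathbb{C}_R)$ given by $(Vg)(\mu)=\pi^{-1/2}(\mu+i)^{-1}g((\mu-i)/(\mu+i))$ and $(V_1g)(\mu)=\pi^{-1/2}(\mu+1)^{-1}g((\mu-1)/(\mu+1))$, the classical isomorphisms of the Hardy spaces across the Cayley transform, the constant $\pi^{-1/2}$ being fixed by the boundary change of variables. Writing $\omega(\mu)=(\mu-i)/(\mu+i)$ and $\omega_R(\mu)=(\mu-1)/(\mu+1)$, one checks $V(\Theta h)=\Theta_0(Vh)$ and $V_1(\Theta h)=\Theta_1(V_1 h)$, so $V$ (resp.\ $V_1$) carries $\Theta H^m_2(\mathbb{D})$ onto $\Theta_0 H^m_2(\mathbb{C}_+)$ (resp.\ $\Theta_1 H^m_2(\mathbb{C}_R)$); hence $\Theta_0,\Theta_1$ are inner and $\mathcal{H}(\Theta)$ is mapped onto $\mathcal{H}(\Theta_0)$, $\mathcal{H}(\Theta_1)$, respectively. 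Because the disc multiplier $b_0$ is the identity, $b_0\circ\omega=\omega=b_i$ and $b_0\circ\omega_R=\omega_R=b_1$, so $V T_{b_0}=T_{b_i}V$ and $V_1 T_{b_0}=T_{b_1}V_1$; the same three-fact assembly then yields $A_0\cong A_i$ and $A_0\cong A_1$, which is (4) and (5).

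The main obstacle I anticipate is computational rather than conceptual: verifying that each weighted composition operator is genuinely unitary, and in particular surjective, onto the correct Hardy space with the stated normalization. This requires the boundary Jacobian computation separately for each of the three inner products in \eqref{eq:stip} and the correct branch or constant of the weight. Once these unitarity statements and the elementary multiplier identities $b_\alpha\circ\varphi_\alpha=b_\ast$ (and $b_0\circ\omega=b_i$, $b_0\circ\omega_R=b_1$) are in hand, the intertwining of the projections is automatic from the fact that the unitary carries the inner-times-Hardy subspace to its counterpart, and the intertwining of the basic operators follows at once.
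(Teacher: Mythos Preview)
Your proposal is correct and follows a genuinely cleaner route than the paper's own proof. The paper constructs the same weighted composition unitaries $V_\alpha$ (your $U_\alpha$) and the same Cayley unitaries $V$, $V_1$, but then verifies the intertwining $V_\alpha A_\alpha = A_\ast V_\alpha$ by applying Lemma~\ref{lem: basic op expression}, the explicit formula for $A_\alpha f$ in terms of $b_\alpha f$ and the correction term $\Theta(\Theta^\# f)(\cdot)$. For (2) this means computing $(V_\alpha A_\alpha f)(\lambda)$ directly from that formula and matching it against $(A_i V_\alpha f)(\lambda)$; for (4) it means invoking the lemma twice and chasing a limit identity for $(\Theta^\# f)(1/\overline{\beta})/\overline{\beta}$ as $\beta\to 0$.

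Your argument bypasses Lemma~\ref{lem: basic op expression} entirely. Once you observe that $b_\alpha\circ\varphi_\alpha=b_\ast$ and that $U_\alpha(\Theta H^m_2)=\widetilde{\Theta}H^m_2$, the intertwining of projections $U_\alpha\Pi_\Theta=\Pi_{\widetilde{\Theta}}U_\alpha$ is automatic on $H^m_2$, and the identity $U_\alpha A_\alpha=\Pi_{\widetilde{\Theta}}U_\alpha T_{b_\alpha}=\Pi_{\widetilde{\Theta}}T_{b_\ast}U_\alpha=A_\ast U_\alpha$ falls out in one line. This is more conceptual and scales uniformly across all five parts, whereas the paper's computations are case-specific. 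The trade-off is that the paper's approach displays the explicit action of $A_\alpha$ on $\mathcal{H}(\Theta)$, which may be of independent use; your approach hides this but delivers the unitary equivalence more directly. One small remark: your sentence ``this proves $\widetilde{\Theta}$ is inner'' slightly overstates what the subspace argument gives; innerness of $\widetilde{\Theta}=\Theta\circ\varphi_\alpha$ is really just the observation that $\varphi_\alpha$ is an automorphism of $\Omega_+$ carrying boundary to boundary, which you should state separately.
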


\begin{proof} It is readily checked that $\widetilde{\Theta}$  is an $m\times m$ matrix valued inner function in $\Omega_+$. The rest of the proof is broken into steps.
	\bigskip
	
	\noindent	{\bf 1.} \textit{Verification of $(1)$$:$} Suppose $\Omega_+=\mathbb{D}$. Proposition 4.1 in \cite{Cima} can be extended to the matrix case to show that the operator $A_\alpha$ acting in $\mathcal{H}(\Theta)$ is unitarily equivalent to the model operator $A_0$ in $\mathcal{H}(\widetilde{\Theta})$.
	More precisely,  if $V_\alpha: H^m_2(\mathbb{D}) \rightarrow H^m_2(\mathbb{D})$ is defined by the formula
	\begin{align}
		(V_\alpha f)(\lambda)= \dfrac{\sqrt{1-|\alpha|^2}}{1+\lambda \bar{\alpha}} f(\varphi_\alpha(\lambda)) \quad (f\in H^m_2, \lambda \in \mathbb{D}),
	\end{align}
	then $V_\alpha$ is a unitary operator that maps $\mathcal{H}(\Theta)$ onto $\mathcal{H}(\widetilde{\Theta})$ and
	\begin{align*}
		V_\alpha A_\alpha f= \Pi_{\widetilde{\Theta}} \, b_0 V_\alpha f=A_0 V_\alpha f \quad (f\in \mathcal{H}(\Theta)).
	\end{align*} 
	
	\bigskip
	
	\noindent
	{\bf 2.} \textit{Verification of $(2)$$:$} Suppose $\Omega_+=\mathbb{C}_+$.
	Let $V_\alpha: H^m_2(\mathbb{C}_+)\rightarrow H^m_2(\mathbb{C}_+)$ be defined by	the formula
	\begin{align}
		(V_\alpha f)(\lambda)=\sqrt{(\alpha-\overline{\alpha})/2i} \, f(\varphi_\alpha(\lambda))  \, \, \, (f\in H^m_2, \lambda\in \mathbb{C}_+).
	\end{align}
	Let $c=(\alpha+\overline{\alpha})/2$ and $d=(\alpha-\overline{\alpha})/2i $ so that $\alpha=c+id$ with $ c\in \mathbb{R}$ and $d>0$.	It is readily checked  that $V_\alpha $ is a unitary operator on $H_2^m$ and 
	\begin{align*}
		\left(	V_\alpha^\ast f \right) (\lambda)=\dfrac{1}{\sqrt{d}} \, f\left(\dfrac{\lambda-c}{d}\right) \,\,\,\, (f\in H^m_2, \lambda \in \mathbb{C}_+).
	\end{align*}
	Furthermore, $V_\alpha$ maps $\mathcal{H}(\Theta)$ onto $\mathcal{H}(\widetilde{\Theta})$. By applying Lemma \ref{lem: basic op expression}, we get
	\begin{align}\label{eq: unitary equivalence}
		& (V_\alpha A_\alpha f)(\lambda) \nonumber \\
		&= \sqrt{d}\left \{\left( \dfrac{d \lambda +c-\alpha}{d \lambda+c -\overline{\alpha}}\right) f(d\lambda+c)+\left( \dfrac{\alpha-\overline{\alpha}}{d\lambda+c-\overline{\alpha}}\right) \Theta(d\lambda+c) (\Theta^\# f)(\overline{\alpha}) \right \} \nonumber \\
		&=  \sqrt{d}\left \{ \left(\dfrac{\lambda-i}{\lambda+i}\right) f(d\lambda+c)+\dfrac{2i}{\lambda+i} \Theta(d\lambda+c) (\Theta^\# f) (\overline{\alpha}) \right \}
	\end{align}
	For simplicity, let us write $g(\lambda)=(V_\alpha f)(\lambda)= \sqrt{d}f(d\lambda+c)$. Then 
	\begin{align*}	
		\sqrt{d}(\Theta^\# f) (\overline{\alpha})=\sqrt{d}(\Theta^\# f)(\varphi_\alpha(-i))=(\widetilde{\Theta}^\# g) (-i).
	\end{align*}
	Thus, \eqref{eq: unitary equivalence} becomes
	\begin{align*}
		(V_\alpha A_\alpha f)(\lambda)&= \left(\dfrac{\lambda-i}{\lambda+i}\right)g(\lambda)+\left(\dfrac{2i}{\lambda+i}\right) \widetilde{\Theta}(\lambda) (\widetilde{\Theta}^\# g)(-i) \\
		&= (A_i g) (\lambda) \\
		&= (A_iV_\alpha f)(\lambda).
	\end{align*}
	
	\bigskip
	
	\noindent
	{\bf 3.} \textit{Verification of $(3)$$:$}
	The analysis for $\Omega_+=\mathbb{C}_R$ is similar to the analysis for $\Omega_+=\mathbb{C}_+$. Here $V_\alpha: H^m_2(\mathbb{C}_R)\rightarrow H^m_2(\mathbb{C}_R)$ is defined by the formula
	\begin{align}
		(V_\alpha f)(\lambda)= \sqrt{(\alpha+\overline{\alpha})/2} \, f(\varphi_\alpha(\lambda)) \quad (f\in H^m_2, \lambda \in \mathbb{C}_R),
	\end{align}
	and $\varphi_\alpha(\lambda)= c \lambda +i d$.

	\bigskip
	
	\noindent
	{\bf 4.} \textit{Verification of $(4)$$:$}
	Let $\Theta$ be inner in $\mathbb{D}$ and let	$\Theta_0(\lambda )=\Theta\left(\dfrac{\lambda  -i}{\lambda +i}\right)$ for all $\lambda  \in \mathbb{C}_+$. Then $\Theta_0$ is inner in $\mathbb{C}_+$. Consequently, $\Theta^\#(\omega)=\Theta(1/\overline{\omega})^\ast$ for $\omega \in \mathbb{D}\setminus \{0\}$ and $\Theta_0^\#(\omega)=\Theta_0(\overline{\omega})^\ast$ for $\omega\in \mathbb{C}_+$.
	Define the map $V: H_2^m(\mathbb{D})\rightarrow H^m_2(\mathbb{C}_+)$ by the formula
	\begin{align}
		(Vf)(\lambda )=\dfrac{1}{\sqrt{\pi}(\lambda +i)} f\left(\dfrac{\lambda -i}{\lambda +i}\right) \, \, \, \, \, \, (f\in H_2^m(\mathbb{D}), \lambda  \in \mathbb{C}_+ ).
	\end{align}
	Then it can be checked that $V$ is a unitary operator that maps $\mathcal{H}(\Theta)$ onto $\mathcal{H}(\Theta_0)$.
	Next, to verify the fomula
	\begin{align}\label{Eq: unitary equivalence}
		A_i V f=VA_0f \quad (f\in \mathcal{H}(\Theta)),
	\end{align}	
	let $f\in \mathcal{H}(\Theta)$. Then, by Lemma \ref{lem: basic op expression} (with $\alpha=i$ and $\Omega_+=\mathbb{C}_+$),
	\begin{align}\label{eq: expression 1}
		&	(A_i V f)(\lambda ) \nonumber \\
		&=b_i(\lambda )(Vf)(\lambda )+\left(\dfrac{2i}{\lambda +i}\right)\Theta_0(\lambda )\left( \Theta_0^\# Vf\right)(-i)  \nonumber \\
		&= 	\dfrac{1}{\sqrt{\pi}(\lambda  +i)} \left( \dfrac{\lambda -i}{\lambda +i}\right)f\left( \dfrac{\lambda -i}{\lambda +i}\right)+\left(\dfrac{2i}{\lambda +i}\right)\Theta\left(\dfrac{\lambda  -i}{\lambda +i}\right)  \left({\Theta_0}^\# Vf\right)(-i),
	\end{align}
	whereas, by another application of Lemma \ref{lem: basic op expression}, for $\alpha=0$, $\Omega_+=\mathbb{D}$ and $u= \lim\limits_{\beta\rightarrow 0} \dfrac{ (\Theta^\# f)(1/\overline{\beta})}{\overline{\beta}}$, 
	\begin{align}\label{eq: expression 2}
		(VA_0f)(\lambda )=\dfrac{1}{\sqrt{\pi}(\lambda  +i)} \left( \dfrac{\lambda -i}{\lambda +i}\right) f\left( \dfrac{\lambda -i}{\lambda +i}\right)- \dfrac{1}{\sqrt{\pi}(\lambda  +i)} \Theta\left(\dfrac{\lambda  -i}{\lambda +i}\right) u.
	\end{align}
	Therefore, it remains to show that
	\begin{align}\label{Eq: final equation}
		2i (\Theta_0 ^\# V f)(-i)= -\dfrac{1}{\sqrt{\pi}} u.
	\end{align} 
	Let $\dfrac{1}{\overline{\beta}}=\dfrac{\omega-i}{\omega+i}$ so that $\omega= i\left(\dfrac{\overline{\beta}+1}{\overline{\beta}-1}\right)$. Then $\omega\rightarrow -i$  if and only if $\beta\rightarrow 0 $ and 
	\begin{align*}
		u&= \lim\limits_{\beta\rightarrow 0} \dfrac{ \Theta^\#(1/\overline{\beta}) f(1/\overline{\beta})}{\overline{\beta}} \\
		&= \lim\limits_{\omega\rightarrow -i} (\omega-i) \dfrac{ \Theta^\# \left( \dfrac{\omega-i}{\omega+i}\right) f\left(\dfrac{\omega-i}{\omega+i}\right) }{\omega+i} \\
		&= (-2i) \lim\limits_{\omega\rightarrow -i}  \dfrac{ \Theta_0^\#(\omega) f\left(\dfrac{\omega-i}{\omega+i}\right) }{\omega+i} \\
		&=(-2i)\sqrt{\pi} \lim\limits_{\omega\rightarrow -i}  \Theta_0^\#(\omega) (Vf)(\omega)  \\
		&= -2i\sqrt{\pi} \left( \Theta_0^\# Vf\right)(-i),
	\end{align*}
	which verifies \eqref{Eq: final equation}. Hence the claim in \eqref{Eq: unitary equivalence} follows.

	\bigskip
	
	\noindent
	{\bf 5.} \textit{Verification of $(5)$$:$} The proof is similar to the proof of (4), but now the unitary operator $V: H^m_2(\mathbb{D})\rightarrow H^m_2(\mathbb{C}_R)$ is given by the formula \begin{align}
		(Vf)(\lambda )=\dfrac{1}{\sqrt{\pi}(\lambda +1)} f\left(\dfrac{\lambda -1}{\lambda +1}\right) \, \, \, \, \, \, (f\in H_2^m(\mathbb{D}), \lambda  \in \mathbb{C}_R ).
	\end{align} 
	
\end{proof}
\begin{remark}
	We thank the reviewer for calling our attention to the interesting paper \cite{Cima} and posing queries that pushed us to write this section.
\end{remark}

\section{The de Branges spaces ${\mathcal{B}}({\mathfrak{E}})$}
\label{sec:bofe0}
We begin with a quick introduction to de Branges spaces; for additional discussion see e.g., Sections 3.21 and 5.10 of \cite{Arov-Dym}. 

Let $H_\infty^{p\times q}(\Omega_+)$ (resp., $H_\infty^{p\times q}(\Omega_-)$) denote the set of $p\times q$ matrix valued functions (mvf's for short) with entries that are holomorphic and bounded in $\Omega_+$ (resp., $\Omega_-$) and let $\Pi^{p\times q}$ denote the set of $p\times q$ matrix valued functions that are meromorphic in $\mathbb{C}\setminus\Omega_0$ such that:
\begin{enumerate}
	\item[\rm(1)] $f=g_+/h_+$ in $\Omega_+$ with $g_+\in H_\infty^{p \times q}(\Omega_+)$ and a nonzero $h_+\in H_\infty^{1\times 1}(\Omega_+)$.
	\item[\rm(2)] $f=g_-/h_-$ in $\Omega_-$ with $g_-\in H_\infty^{p \times q}(\Omega_-)$ and a nonzero $h_-\in H_\infty^{1\times 1}(\Omega_-)$. 
	\item[\rm(3)] The nontangential limits  $(g_+/h_+)(\mu )$ and $(g_-/h_-)(\mu )$  at the boundary are equal at almost all points $\mu\in\Omega_0$.
\end{enumerate}

An $m\times 2m$ mvf 
$\mathfrak{E}(\lambda)=\begin{bmatrix}E_-(\lambda)&E_+(\lambda)\end{bmatrix}$
with $m \times m$ blocks $E_\pm(\lambda)$ that are meromorphic in $\Omega_+\cup\Omega_-$ will be called a {\bf de Branges matrix} with respect to $\Omega_+$ 
if 
\begin{equation}
	\label{eq:mar8a22}
	\mathfrak{E}\in\Pi^{m\times 2m},\quad \det\,E_+(\lambda)\not\equiv 0\ \textrm{in}\  \Omega_+\quad\textrm{and}\quad  E_+^{-1}E_-\ 
	\textrm{is an $m \times m$ inner mvf in $\Omega_+$}.
\end{equation}

\begin{remark}
	\label{rem:mar8a22}
	The definitions simplify if $\mathfrak{E}(\lambda)$ is restricted to be entire. Then only the last two constraints in \eqref{eq:mar8a22} are needed; see e.g., \cite{Arov-Dym 3}. 
\end{remark}
If $\mathfrak{E}$ is a de Branges matrix, let
\[
{\mathcal{B}}({\mathfrak{E}})=\{f\in\Pi^{m\times 1}:\,E_+^{-1}f\in H_2^m(\Omega_+)\ominus E_+^{-1}E_-H_2^m(\Omega_+)\}
\]
and set 
\begin{equation}
	\label{eq:feb17a22}
	\langle f,g\rangle_{\mathcal{B}({\mathfrak{E}})}=\langle E_+^{-1}f,E_+^{-1}g\rangle_{st}, \quad\textrm{the inner product defined by} \ \eqref{eq:stip}.
\end{equation}
(We have added the subscript st to the inner product \eqref{eq:stip} because in the next several lines there will be two inner products  in play.) 
Then the space ${\mathcal{B}}(\mathfrak{E})$ equipped with the inner product \eqref{eq:feb17a22}  
is a RKHS with RK
\[
K_\omega^{\mathfrak{E}}(\lambda)=\frac{E_+(\lambda)E_+(\omega)^*-E_-(\lambda)E_-(\omega)^*}{\rho_\omega(\lambda)}.
\]

\begin{definition}
	For each de Branges matrix $\mathfrak{E}=\begin{bmatrix}E_-&E_+\end{bmatrix}$ with $m \times m$ blocks $E_\pm$, let 
	$L_2^m(\Delta_{\mathfrak{E}})$ denote the set of $f$ for which the integral $\langle f,f\rangle_{{\mathcal{B}}({\mathfrak{E}})}$ defined by \eqref{eq:feb17a22} 
	is finite and let 
	$\Pi_{\mathfrak{E}}$ denote the orthogonal projection from $L_2^m(\Delta_{\mathfrak{E}})$ onto ${\mathcal{B}}({\mathfrak{E}})$. 
\end{definition}

The extension of Theorem \ref{thm: main} to de Branges spaces rests on the following theorem which is taken from \cite{Dym}. For the convenience of the reader, we shall sketch the proof for one choice of $\Omega_+$.

\begin{theorem}
	\label{thm:feb22a22}
	If $\mathfrak{E}=\begin{bmatrix}E_-&E_+\end{bmatrix}$ is an $m\times 2m$  de Branges matrix over $\Omega_+$, then  
	\begin{equation}
		\label{eq:feb22b22}
		\Vert \Pi_{\mathfrak{E}}(1/b_\alpha)f\Vert_{{\mathcal{B}}(\mathfrak{E})}^2=\Vert f\Vert_{{\mathcal{B}}({\mathfrak{E}})}^2-\rho_\alpha(\alpha)(E_+^{-1}f)(\alpha)^*(E_+^{-1}f)(\alpha) 
	\end{equation}
	for $f\in{\mathcal{B}}(\mathfrak{E})$ and $\alpha\in\Omega_+$, for all three 
	choices of $\Omega_+$.
\end{theorem}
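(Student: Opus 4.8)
The plan is to reduce Theorem \ref{thm:feb22a22} to the already-established Theorem \ref{thm: adjoint norm} by transporting everything through the unitary map $f \mapsto E_+^{-1}f$. By the very definition of $\mathcal{B}(\mathfrak{E})$ and its inner product \eqref{eq:feb17a22}, the correspondence $U: f \mapsto E_+^{-1}f$ is an isometric isomorphism from $\mathcal{B}(\mathfrak{E})$ onto the model space $\mathcal{H}(\Theta)$, where $\Theta = E_+^{-1}E_-$ is the inner mvf guaranteed by \eqref{eq:mar8a22}. First I would show that this map intertwines the two projected multiplication operators, i.e. that
\begin{align*}
	E_+^{-1}\,\Pi_{\mathfrak{E}}\,(1/b_\alpha)\,f = \Pi_\Theta\,(1/b_\alpha)\,(E_+^{-1}f) = A_\alpha^\ast(E_+^{-1}f)
\end{align*}
for $f \in \mathcal{B}(\mathfrak{E})$. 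The key point here is that multiplication by the scalar factor $1/b_\alpha$ commutes with multiplication by $E_+^{-1}$, and that $U$ carries the orthogonal decomposition $L_2^m(\Delta_{\mathfrak{E}}) = \mathcal{B}(\mathfrak{E}) \oplus \mathcal{B}(\mathfrak{E})^{\perp}$ to the decomposition $L_2^m = \mathcal{H}(\Theta) \oplus \mathcal{H}(\Theta)^{\perp}$, so that the projections correspond: $U\,\Pi_{\mathfrak{E}} = \Pi_\Theta\,U$ on the appropriate domains.

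Granting the intertwining relation, the desired identity \eqref{eq:feb22b22} follows immediately. Applying the isometry of $U$ and then Theorem \ref{thm: adjoint norm} to the function $g := E_+^{-1}f \in \mathcal{H}(\Theta)$, I would write
\begin{align*}
	\Vert \Pi_{\mathfrak{E}}(1/b_\alpha)f\Vert_{\mathcal{B}(\mathfrak{E})}^2
	= \Vert E_+^{-1}\Pi_{\mathfrak{E}}(1/b_\alpha)f\Vert^2
	= \Vert A_\alpha^\ast g\Vert^2
	= \Vert g\Vert^2 - \rho_\alpha(\alpha)\,g(\alpha)^\ast g(\alpha).
\end{align*}
Since $\Vert g\Vert^2 = \Vert E_+^{-1}f\Vert^2 = \Vert f\Vert_{\mathcal{B}(\mathfrak{E})}^2$ and $g(\alpha) = (E_+^{-1}f)(\alpha)$, this is exactly \eqref{eq:feb22b22}. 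Thus the entire content of the theorem is the transfer of structure under $U$, and no new analytic estimate is needed once the intertwining is in place.

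The main obstacle, and the place where care is required, lies in justifying that $U$ genuinely respects the projections — specifically, that the complement $\mathcal{B}(\mathfrak{E})^\perp$ inside $L_2^m(\Delta_{\mathfrak{E}})$ maps under $E_+^{-1}$ to $\mathcal{H}(\Theta)^\perp = \Theta H_2^m \oplus (L_2^m \ominus H_2^m)$. This requires verifying that $(1/b_\alpha)f$ lands in $L_2^m(\Delta_{\mathfrak{E}})$ (so that the projection $\Pi_{\mathfrak{E}}$ is even defined on it) and that the boundary behaviour of $E_+$ on $\Omega_0$ makes the weighted $L_2$ space and the flat $L_2^m$ unitarily identified via multiplication by $E_+^{-1}$. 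This is precisely the content alluded to in the statement that the result is "taken from \cite{Dym}"; as the theorem announces, I would sketch the verification for one representative choice of $\Omega_+$ (say $\Omega_+ = \mathbb{C}_+$), where the boundary identity $E_+^\ast E_+ = \Delta_{\mathfrak{E}}$ a.e.\ on $\mathbb{R}$ makes the isometry transparent, and remark that the remaining two cases are handled in the same manner.
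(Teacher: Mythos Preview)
Your approach is correct but genuinely different from the paper's. The paper does not invoke Theorem \ref{thm: adjoint norm} at all; instead it repeats the direct computation in the de Branges setting. Concretely, the paper writes $1/b_\alpha=1+(\alpha+\overline{\alpha})/\delta_\alpha$ (for $\Omega_+=\mathbb{C}_R$), evaluates $\Pi_{\mathfrak{E}}(f/\delta_\alpha)$ by pairing against the reproducing kernel $K_\omega^{\mathfrak{E}}$ and using a partial-fractions identity for $1/(\rho_\omega\rho_\alpha)$, and thereby obtains the explicit formula $(I-\Pi_{\mathfrak{E}})(1/b_\alpha)f=(\alpha+\overline{\alpha})E_+\,(E_+^{-1}f)(\alpha)/\delta_\alpha$; the norm identity then follows from the Pythagorean relation and a boundary integral. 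Your route---transport via the unitary $U\colon f\mapsto E_+^{-1}f$ and the intertwining $U\Pi_{\mathfrak{E}}=\Pi_\Theta U$---is shorter and avoids redoing the kernel calculation, at the price of depending on Theorem \ref{thm: adjoint norm}. Note, incidentally, that the intertwining you flag as the ``main obstacle'' is essentially tautological here: by the paper's own definitions, $L_2^m(\Delta_{\mathfrak{E}})$ consists exactly of those $f$ with $E_+^{-1}f\in L_2^m$, and the inner product \eqref{eq:feb17a22} makes $U$ a global unitary from $L_2^m(\Delta_{\mathfrak{E}})$ onto $L_2^m$ carrying $\mathcal{B}(\mathfrak{E})$ onto $\mathcal{H}(\Theta)$; the projection correspondence is then automatic, so no separate boundary analysis of $E_+$ is needed.
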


\begin{proof} We focus on $\Omega_+=\mathbb{C}_R$. The first step in the proof is to show that 
	\begin{equation}
		\label{eq:feb22a22}
		(I-\Pi_{\mathfrak{E}})(1/b_\alpha)f=
		(\alpha+\overline{\alpha})E_+\,\frac{\displaystyle (E_+^{-1}f)(\alpha)}{\displaystyle \delta_\alpha} \quad \text{ for } \alpha\in \mathbb{C}_R.
	\end{equation}
	If $\alpha\in\mathbb{C}_R$, $\delta_\alpha(\lambda)=\lambda-\alpha$, $b_\alpha(\lambda)=(\lambda-\alpha)/(\lambda+\overline{\alpha})$  and 
	$f\in{\mathcal{B}}({\mathfrak{E})}$, then
	$$
	\frac{1}{b_\alpha}=1+\frac{\alpha+\overline{\alpha}}{\delta_\alpha}\implies \Pi_{\mathfrak{E}}(1/b_\alpha)f=f+(\alpha+\overline{\alpha})\Pi_{\mathfrak{E}}\frac{f}{\delta_\alpha}.
	$$
	Thus, if $u\in\mathbb{C}^m$ and $\omega\in\mathbb{C}_R$, then 
	\[
	\begin{split}
		u^*\left(\Pi_{\mathfrak{E}}\frac{f}{\delta_\alpha}\right)(\omega)&=
		\left\langle\Pi_{\mathfrak{E}} \frac{f}{\delta_\alpha}, K_\omega^{\mathfrak{E}} u\right\rangle_{{\mathcal{B}}(\mathfrak{E})}
		=\left\langle \frac{f}{\delta_\alpha}, K_\omega^{\mathfrak{E}} u\right\rangle_{{\mathcal{B}}(\mathfrak{E})}
		=-2\pi  \langle f,\rho_\alpha^{-1}K_\omega^{\mathfrak{E}}u\rangle_{{\mathcal{B}}({\mathfrak{E}})}\\ &=
		-2\pi \left\langle f, \frac{E_+E_+(\omega)^*-E_-E_-(\omega)^*}{\rho_\omega\,\rho_\alpha} u\right\rangle_{{\mathcal{B}}(\mathfrak{E})}\\
		&=-2\pi \left\langle E_+^{-1}f, \frac{E_+(\omega)^*}{\rho_\omega\,\rho_\alpha} u\right\rangle_{st}+
		2\pi \left\langle E_+^{-1}f, (E_+^{-1}E_-)\frac{E_-(\omega)^*}{\rho_\omega\,\rho_\alpha} u\right\rangle_{st}.
	\end{split}
	\]
	The second inner product in the last line is equal to zero, since $E_+^{-1}f$, the first entry in that inner product, is orthogonal to 
	$(E_+^{-1}E_-) H_2^m$, whereas the second entry is in $(E_+^{-1}E_-) H_2^m$. To evaluate the first inner product, we write 
	$$
	\frac{1}{\rho_\omega(\lambda)}-\frac{1}{\rho_\alpha(\lambda)}=\frac{1}{2\pi }\left(\frac{1}{\lambda+\overline{\omega}}-\frac{1}{\lambda+\overline{\alpha}}\right)=(2\pi )\frac{\overline{\alpha}-\overline{\omega}}{\rho_\omega(\lambda)\,\rho_\alpha(\lambda)}
	$$
	and hence that
	\[
	\begin{split}
		-2\pi \left\langle E_+^{-1}f, \frac{E_+(\omega)^*}{\rho_\omega\,\rho_\alpha} u\right\rangle_{st}&=\frac{1}{\omega-\alpha}\,\left\{\left\langle E_+^{-1}f, \frac{E_+(\omega)^*}{\rho_\omega} u\right\rangle_{st}\right.\\ &-\left.
		\left\langle E_+^{-1}f, \frac{E_+(\omega)^*}{\rho_\alpha} u\right\rangle_{st} \right\}\\
		&=\frac{1}{\omega-\alpha}\, u^*E_+(\omega)\{(E_+^{-1}f)(\omega)-(E_+^{-1}f)(\alpha)\}\\
		&=\frac{1}{\omega-\alpha}\,\{ u^*f(\omega)-u^*E_+(\omega)(E_+^{-1}f)(\alpha)\}.
	\end{split}
	\]
	Formula \eqref{eq:feb22a22} drops out by combining these equalities.
	
	To obtain \eqref{eq:feb22b22}, observe that
	\[
	\begin{split}
		\Vert f\Vert_{{\mathcal{B}}(\mathfrak{E})}^2-\Vert \Pi_{\mathfrak{E}}(1/b_\alpha)f\Vert_{{\mathcal{B}}(\mathfrak{E})}^2&=
		\Vert(1/b_\alpha)f\Vert_{{\mathcal{B}}(\mathfrak{E})}^2-\Vert \Pi_{\mathfrak{E}}(1/b_\alpha)f\Vert_{{\mathcal{B}}(\mathfrak{E})}^2\\
		&=\Vert (I- \Pi_{\mathfrak{E}})(1/b_\alpha)f\Vert_{{\mathcal{B}}(\mathfrak{E})}^2\\
		&=\vert \alpha+\overline{\alpha}\vert^2 \Vert E_+\,\frac{(E_+^{-1}f)(\alpha)}{\delta_\alpha}\Vert_{{\mathcal{B}}({\mathfrak{E}})}^2\\
		&=\vert \alpha+\overline{\alpha}\vert^2 (E_+^{-1}f)(\alpha)^*(E_+^{-1}f)(\alpha)\\ &\qquad\qquad \times \,\int_{-\infty}^\infty 
		\vert i\nu-\alpha\vert^{-2}d\nu
	\end{split}
	\]
	agrees with \eqref{eq:feb22b22}, since  the integral is equal to $2\pi /(\alpha+\overline{\alpha})$.
\end{proof}

\begin{theorem}\label{thm:mar1a22}
	Let $\mathfrak{E}=\begin{bmatrix}E_-&E_+\end{bmatrix}$ be a de Branges matrix of size $m\times 2m$  and for each point  $\alpha\in \Omega_+$ at which $\mathfrak{E}$ is holomorphic and $E_+(\alpha)$ is invertible, let
	$$
	B_\alpha=\Pi_{\mathfrak{E}}b_\alpha\vert_{{\mathcal{B}}({\mathfrak{E}})}\quad and\quad {\mathcal{B}}({\mathfrak{E}})_\alpha=\{f\in{\mathcal{B}}({\mathfrak{E}}):\,f(\alpha)=0\}.
	$$
	Then $B_\alpha \in \mathcal{N} \mathcal{A}$ and:
	\begin{enumerate}[\rm(1)]
		\item  $\| B_\alpha\|=1$ if and only if   ${\mathcal{B}}({\mathfrak{E}})_\alpha \neq\{0\}$.
		
		\item 
		If ${\mathcal{B}}({\mathfrak{E}})_\alpha= \{0\}$, $\Theta=E_+^{-1}E_-$ and $s_1\geq s_2\geq \cdots\geq s_m$ are the singular values of $\Theta(\alpha)$, then $$\| B_\alpha\|=\underset{1\leq j\leq m}{\max } \, \{s_j: s_j<1\}.$$
	\end{enumerate}
\end{theorem}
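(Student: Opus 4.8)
The plan is to mirror the proof of Theorem~\ref{thm: main}, with Theorem~\ref{thm:feb22a22} playing the role that Theorem~\ref{thm: adjoint norm} played in the model space setting. First I would identify the adjoint: exactly as in Lemma~\ref{lem: adjoint}, since $|b_\alpha|=1$ on the boundary of $\Omega_+$, one checks that $\langle B_\alpha f,g\rangle_{\mathcal{B}(\mathfrak{E})}=\langle f,(1/b_\alpha)g\rangle_{\mathcal{B}(\mathfrak{E})}=\langle f,\Pi_{\mathfrak{E}}(1/b_\alpha)g\rangle_{\mathcal{B}(\mathfrak{E})}$ for $f,g\in\mathcal{B}(\mathfrak{E})$, so that $B_\alpha^\ast g=\Pi_{\mathfrak{E}}(1/b_\alpha)g$ and Theorem~\ref{thm:feb22a22} reads
\[
\|B_\alpha^\ast f\|_{\mathcal{B}(\mathfrak{E})}^2=\|f\|_{\mathcal{B}(\mathfrak{E})}^2-\rho_\alpha(\alpha)\,(E_+^{-1}f)(\alpha)^\ast(E_+^{-1}f)(\alpha),\qquad f\in\mathcal{B}(\mathfrak{E}).
\]
The one structural fact I would record at the outset is that, since $\mathfrak{E}$ is holomorphic at $\alpha$ and $E_+(\alpha)$ is invertible, $(E_+^{-1}f)(\alpha)=E_+(\alpha)^{-1}f(\alpha)$; in particular $f(\alpha)=0$ if and only if $(E_+^{-1}f)(\alpha)=0$, so $\mathcal{B}(\mathfrak{E})_\alpha$ is exactly the set on which the correction term above vanishes.

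For part (1), if $\mathcal{B}(\mathfrak{E})_\alpha\neq\{0\}$ I would take a nonzero $f$ with $f(\alpha)=0$; the displayed formula gives $\|B_\alpha^\ast f\|=\|f\|$, and since $B_\alpha$ is a contraction ($|b_\alpha|=1$ on the boundary), this shows $B_\alpha^\ast\in\mathcal{N}\mathcal{A}$ with $\|B_\alpha^\ast\|=1$, hence $B_\alpha\in\mathcal{N}\mathcal{A}$ and $\|B_\alpha\|=1$ (using $N\in\mathcal{N}\mathcal{A}\iff N^\ast\in\mathcal{N}\mathcal{A}$). Conversely, once $B_\alpha^\ast\in\mathcal{N}\mathcal{A}$ is known, if $\|B_\alpha\|=1$ there is a unit vector $g$ with $\|B_\alpha^\ast g\|=\|g\|$, forcing $(E_+^{-1}g)(\alpha)=0$ and hence $g(\alpha)=0$, i.e. $\mathcal{B}(\mathfrak{E})_\alpha\neq\{0\}$.

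For part (2), when $\mathcal{B}(\mathfrak{E})_\alpha=\{0\}$ the decomposition \eqref{decomp} collapses to $\mathcal{B}(\mathfrak{E})=\mathcal{M}_\alpha^{\mathfrak{E}}:=\{K_\alpha^{\mathfrak{E}}u:u\in\mathbb{C}^m\}$, and I would repeat the computation of Lemma~\ref{lem: main lemma} and Theorem~\ref{thm: NA on subspace} on this subspace. Writing $\Theta=E_+^{-1}E_-$ and using $E_-(\alpha)=E_+(\alpha)\Theta(\alpha)$ gives $K_\alpha^{\mathfrak{E}}(\alpha)=E_+(\alpha)N_\alpha(\alpha)E_+(\alpha)^\ast/\rho_\alpha(\alpha)$ with $N_\alpha(\alpha)=I_m-\Theta(\alpha)\Theta(\alpha)^\ast$. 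For $f=K_\alpha^{\mathfrak{E}}u$ one then has $\|f\|^2=u^\ast K_\alpha^{\mathfrak{E}}(\alpha)u$ and $(E_+^{-1}f)(\alpha)=N_\alpha(\alpha)E_+(\alpha)^\ast u/\rho_\alpha(\alpha)$; substituting $w=E_+(\alpha)^\ast u$ (which ranges over all of $\mathbb{C}^m$ as $u$ does, since $E_+(\alpha)$ is invertible) cancels every $E_+(\alpha)$ factor and reduces the quotient to
\[
\frac{\|B_\alpha^\ast f\|^2}{\|f\|^2}=\frac{w^\ast N_\alpha(\alpha)^{1/2}(I_m-N_\alpha(\alpha))N_\alpha(\alpha)^{1/2}w}{w^\ast N_\alpha(\alpha)w}=\frac{z^\ast\Theta(\alpha)\Theta(\alpha)^\ast z}{z^\ast z},
\]
where $z=N_\alpha(\alpha)^{1/2}w$ ranges over the range of $N_\alpha(\alpha)$. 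This is precisely the expression analyzed in Theorem~\ref{thm: NA on subspace}: feeding in the singular value decomposition $\Theta(\alpha)=VSU^\ast$, the range of $N_\alpha(\alpha)$ is the span of the left singular vectors with $s_j<1$, so the maximum of the Rayleigh quotient is $s_{k+1}^2$ (with $s_1=\cdots=s_k=1>s_{k+1}$), attained at the corresponding singular vector. Thus $\|B_\alpha^\ast\|=\max\{s_j:s_j<1\}$ is attained, whence $B_\alpha\in\mathcal{N}\mathcal{A}$ and $\|B_\alpha\|=\|B_\alpha^\ast\|=\max\{s_j:s_j<1\}$.

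The only real work is the reduction in part (2), and the thing that makes it go through is the cancellation of the $E_+(\alpha)$ factors effected by the substitution $w=E_+(\alpha)^\ast u$. Conceptually this cancellation is no accident: the map $f\mapsto E_+^{-1}f$ is a unitary from $\mathcal{B}(\mathfrak{E})$ onto the model space $\mathcal{H}(\Theta)$ that intertwines $\Pi_{\mathfrak{E}}$ with $\Pi_\Theta$ and hence carries $B_\alpha$ to $A_\alpha$ and $\mathcal{B}(\mathfrak{E})_\alpha$ to $\mathcal{H}(\Theta)_\alpha$; in fact this unitary equivalence alone would let one deduce the whole theorem from Theorem~\ref{thm: main}, bypassing the computation above. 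I would present the direct route for self-containedness but flag the equivalence, since it also explains why Theorem~\ref{thm:feb22a22} has the same shape as Theorem~\ref{thm: adjoint norm}.
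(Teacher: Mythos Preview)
Your proposal is correct and follows essentially the same route as the paper: both arguments use Theorem~\ref{thm:feb22a22} to establish the norm identity for $B_\alpha^\ast=\Pi_{\mathfrak{E}}(1/b_\alpha)$, handle (1) exactly as in Theorem~\ref{thm: main}, and for (2) reduce the Rayleigh quotient on $\{K_\alpha^{\mathfrak{E}}u\}$ to $v^\ast\Theta(\alpha)\Theta(\alpha)^\ast v/(v^\ast v)$ via the substitution $v=N_\alpha(\alpha)^{1/2}E_+(\alpha)^\ast u$, invoking Theorem~\ref{thm: NA on subspace} to finish. Your closing remark on the unitary equivalence $f\mapsto E_+^{-1}f$ between $\mathcal{B}(\mathfrak{E})$ and $\mathcal{H}(\Theta)$ is a nice addition not made explicit in the paper, and indeed gives a one-line alternative derivation of the whole theorem from Theorem~\ref{thm: main}.
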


\begin{proof} We first observe that under the given assumptions on $\alpha$, every vector valued function $f\in{\mathcal{B}}({\mathfrak{E}})$ is automatically holomorphic at $\alpha$, since
	$$
	f(\alpha)=E_+(\alpha)\,(E_+^{-1}f)(\alpha)\quad\textrm{and}\quad E_+^{-1}f\in H_2^m(\Omega_+).
	$$ 
	In view of formula \eqref{eq:feb22b22}, the proof of (1) is similar to the proof of (1) in Theorem \eqref{thm: main}.
	
	The verification of (2) rests on the decomposition 
	\[
	{\mathcal{B}}({\mathfrak{E}})={\mathcal{B}}({\mathfrak{E}})_\alpha\oplus \{K_\alpha^{\mathfrak{E}}u:\, u\in\mathbb{C}^m\}.
	\]	
	If ${\mathcal{B}}({\mathfrak{E}})_\alpha=\{0\}$, then $f\in{\mathcal{B}}({\mathfrak{E}})$ if and only if $f=K_\alpha^{\mathfrak{E}}u$ for some $u\in\mathbb{C}^m$. Then, for such $f$,
	\[
	\begin{split}
		\Vert B_\alpha^*K_\alpha^{\mathfrak{E}}u\Vert_{{\mathcal{B}}({\mathfrak{E}})}^2&=\Vert K_\alpha^{\mathfrak{E}}u\Vert_{{\mathcal{B}}({\mathfrak{E}})}^2-
		\rho_\alpha(\alpha)K_\alpha^{\mathfrak{E}}(\alpha)(E_+(\alpha)^*)^{-1}E_+(\alpha)^{-1}K_\alpha^{\mathfrak{E}}(\alpha)u\\
		&=\rho_\alpha(\alpha)^{-1}u^*E_+(\alpha)\{I_m-\Theta(\alpha)\Theta(\alpha)^*-(I_m-\Theta(\alpha)\Theta(\alpha)^*)^2\}E_+(\alpha)^*u\\
		&=\rho_\alpha(\alpha)^{-1}u^*E_+(\alpha)M_\alpha(\alpha)^{1/2}\{I_m-M_\alpha(\alpha)\}M_\alpha(\alpha)^{1/2}E_+(\alpha)^*u,
	\end{split}
	\]
	where $M_\alpha(\alpha)=I_m-\Theta(\alpha)\Theta(\alpha)^*$. Consequently, if $v=M_\alpha(\alpha)^{1/2}E_+(\alpha)^*u\ne 0$, then
	\[
	\frac{\Vert B_\alpha^*K_\alpha^{\mathfrak{E}}u\Vert_{{\mathcal{B}}({\mathfrak{E}})}^2}{\Vert K_\alpha^{\mathfrak{E}}u\Vert_{{\mathcal{B}}({\mathfrak{E}})}^2}=\frac{v^*\{I_m-M_\alpha(\alpha)\}v}{v^*v}=\frac{v^*\Theta(\alpha)\Theta(\alpha)^*v}{v^*v},
	\]
	which, as $E_+(\alpha)$ is invertible, is effectively the same as the right hand side of   \eqref{Eq: important}. Thus, the rest of the proof of Theorem \ref{thm:mar1a22} is exactly the same as the proof of  Theorem \ref{thm: NA on subspace}.
	
	Finally, the proof that $B_\alpha \in \mathcal{N}\mathcal{A}$ is similar to the proof that $A_\alpha \in \mathcal{N}\mathcal{A} $ presented in Theorem \ref{thm: main}. 
\end{proof}

\subsection*{Acknowledgement}
We thank the referee for reading the paper carefully and making useful suggestions.
The research of first named author is supported in part by  the INSPIRE grant of Dr. Srijan Sarkar (Ref: DST/INSPIRE/04/2019/000769), Department
of Science \& Technology (DST), Government of India, the postdoctoral fellowship of IISER Pune, India and the postdoctoral fellowship of Weizmann Institute of Science, Israel.

\subsection*{Competing interests }

The authors declare that there is no conflict of interest.

\subsection*{Data Availability} Not applicable.

\end{document}